\documentclass[sn-mathphys,Numbered]{sn-jnl}


\usepackage{graphicx}%
\usepackage{multirow}%
\usepackage{amsmath,amssymb,amsfonts}%
\usepackage{amsthm}%
\usepackage{mathrsfs}%
\usepackage[title]{appendix}%
\usepackage{xcolor}%
\usepackage{textcomp}%
\usepackage{manyfoot}%
\usepackage{booktabs}%
\usepackage{algorithm}%
\usepackage{algorithmicx}%
\usepackage{algpseudocode}%
\usepackage{listings}%

\usepackage{hyperref}
\usepackage{enumerate}
\usepackage{subcaption}

\theoremstyle{plain}
\newtheorem{theorem}{Theorem}

\theoremstyle{definition}

\newtheorem{remark}{Remark}

\newcommand{\Lcal}{\mathcal{L}}
\newcommand{\Acal}{\mathcal{A}}
\newcommand{\Dt}{\Delta t}

\newcommand{\Isf}{\mathsf{I}}
\newcommand{\Asf}{\mathsf{A}}
\newcommand{\Ssf}{\mathsf{S}}
\newcommand{\Tsf}{\mathsf{T}}
\newcommand{\Dsf}{\mathsf{D}}
\newcommand{\Lsf}{\mathsf{L}}
\newcommand{\Esf}{\mathsf{E}}
\newcommand{\Vsf}{\mathsf{V}}

\newcommand{\usf}{\mathsf{u}}
\newcommand{\vsf}{\mathsf{v}}
\newcommand{\wsf}{\mathsf{w}}
\newcommand{\gsf}{\mathsf{g}}
\newcommand{\fsf}{\mathsf{f}}
\newcommand{\xsf}{\mathsf{x}}

\graphicspath{{./fig/}}  

\raggedbottom

\begin{document}

\title[Article Title]{Article Title}


\title[Article Title]{Fast and high-order approximation of parabolic equations using hierarchical direct solvers and implicit Runge-Kutta methods}

\author*[1]{\fnm{Ke} \sur{Chen}}\email{kechen@umd.edu}

\author[2]{\fnm{Daniel} \sur{Appel\"o}}\email{appelo@vt.edu}

\author[3]{\fnm{Tracy} \sur{Babb}}\email{tracy.babb@colorado.edu}

\author[4]{\fnm{Per-Gunnar} \sur{Martinsson}}\email{pgm@oden.utexas.edu}

\affil*[1]{\orgdiv{Department of Mathematics}, \orgname{University of Maryland}, \orgaddress{\street{4176 Campus Drive}, \city{College Park}, \postcode{20742}, \state{MD}, \country{USA}}}

\affil[2]{\orgdiv{Department of Mathematics}, \orgname{Virginia Tech.}, \orgaddress{\street{225 Stanger Street}, \city{Blacksburg}, \postcode{24061}, \state{VA}, \country{USA}}}

\affil[3]{\orgdiv{Applied Mathematics}, \orgname{University of Colorado Boulder}, \orgaddress{\street{2300 Colorado Avenue}, \city{Boulder}, \postcode{80309}, \state{CO}, \country{USA}}}

\affil[4]{\orgdiv{Department of Mathematics}, \orgname{University of Texas at Austin}, \orgaddress{\street{2515 Speedway}, \city{Austin}, \postcode{78712}, \state{TX}, \country{USA}}}


\abstract{A stable and high-order accurate solver for linear and nonlinear parabolic equations is presented. 
	An additive Runge-Kutta method is used for the time stepping, which integrates the linear stiff terms by an explicit singly diagonally implicit Runge-Kutta (ESDIRK) method and the nonlinear terms by an explicit Runge-Kutta (ERK) method. In each time step, the implicit solve is performed by the recently developed Hierarchical Poincar\'e-Steklov (HPS) method. 
	This is a fast direct solver for elliptic equations that decomposes the space domain into a hierarchical tree of subdomains and builds spectral collocation solvers locally on the subdomains. 
	These ideas are naturally combined in the presented method since the singly diagonal coefficient in ESDIRK and a fixed time-step ensures that the coefficient matrix in the implicit solve of HPS remains the same for all time stages. 
	This means that the precomputed inverse can be efficiently reused, leading to a scheme with complexity (in two dimensions) $\mathcal{O}(N^{1.5})$ for the precomputation where the solution operator to the elliptic problems is built, and then $\mathcal{O}(N \log N)$ for the solve in each time step.
	The stability of the method is proved for first order in time and any order in space, and numerical evidence substantiates a claim of stability for a much broader class of time discretization methods. 
	Numerical experiments supporting the accuracy of efficiency of the method in one and two dimensions are presented.}

\keywords{ESDIRK, parabolic, direct solver, high-order, hierarchical}


\maketitle

\section{Introduction}
In this paper we consider numerical methods for solving  parabolic equations of the form
\begin{equation}\label{eqn:general}
	\begin{aligned}
		u_t &= \Lcal u + q + g(u)\,, \quad &\text{in  } (0,T)\times \Omega \\
		u   &= f\,, \quad &\text{on  } (0,T)\times \partial\Omega \\
		u   &= u_0\,, \quad &\text{on  } \{0\}\times \Omega \\
	\end{aligned}
\end{equation}
where $\Lcal$ denotes a general second order elliptic differential operator and $\Omega$ is a bounded domain in $\mathbb{R}^d$.
The function $q = q(t,x)$ denotes an external source and $g=g(t,u)$ denotes a nonlinear term. We consider the case of Dirichlet boundary conditions,  $f$ and denote the initial data by $u_0$. The nonlinear term is assumed to contain derivative operators of degree no greater than one so that the elliptic term $\Lcal u$ dominates $g$, i.e.~the equation is of parabolic type. 
To solve this equation numerically, it is usually preferred to discretize the linear parabolic part by an implicit method to avoid numerical stiffness. In each timestep a linear elliptic problem must be solved and when there are many timesteps it can become advantageous to use a direct solver. This is of course especially true if the complexity (as it is here) of the linear solver is good. In fact, although the pre-computation needed to build a fast direct solver can be more expensive than solving the equation once with an iterative solver, once a solution operator has built, each subsequent solve is very fast. 
In this paper we use an efficient direct elliptic solver called ``Hierarchical Poincar\'e-Steklov(HPS)" coupled with high order Runge-Kutta(RK) time discretization to develop a fast solver to the parabolic equation \eqref{eqn:general}.

The HPS solver is a domain decomposition scheme with spectral collocation discretization on each subdomain. It is drew from the classical nested dissection and multifrontal methods~\cite{davis2006direct,george1973nested} often used for low order finite difference discertizations but differs from these in that it achieves very high order of accuracy of spatial derivatives. 
The HPS method was first proposed in~\cite{martinsson2013direct} for elliptic and Helmholtz equations and later generalized in~\cite{gillman2015spectrally,gillman2014direct,hao2016direct} for general elliptic equation and higher dimensions.
The HPS is a highly efficient direct solver to elliptic equations with high order of convergence~\cite{pgm13,pgmHPS14,martinsson2019fast}. 
These advantages remain when HPS is used for the elliptic solve in combination with implicit Runge-Kutta discretizations of parabolic equations, and preliminary results \cite{babb2018hps,babb2018accelerated} indicated that the resulting scheme is stable and efficient. 
However, challenges remain for proving stability of explicit-implicit schemes. 
For example, there is usually an order barrier for bound preserving higher-order implicit schemes~\cite{gottlieb2011strong} and developing a second order in time for general partial differential equations is still open, though several second order schemes exist for kinetic equations~\cite{hu2018asymptotic}. 
In this manuscript we combine high-order Runge-Kutta schemes with HPS to develop a fast and accurate solver for general parabolic equations. 
Stability is rigorously proven for the case where HPS of any spatial order of accuracy is combined with a first order accurate time discretization (backward Euler). 
That the method remains stable for high order time discretizations as well is substantiated through extensive numerical examples in one and two dimensions.

In Section \ref{sec:hps} we introduce the HPS method for elliptic equations. Section \ref{sec:rkhps} combines HPS with high order Runge-Kutta time discretization and describes the proposed solvers for general parabolic equations. In Section \ref{sec:stable} we investigate the stability of the scheme. 
Numerical examples for one and two dimensional problems are given in Section  \ref{sec:numeric}. 

	\section{The HPS method for time-independent problems}\label{sec:hps}

In this section, we briefly review the HPS method \cite{martinsson2013direct,martinsson2012composite,gillman2015spectrally,gillman2014direct,hao2016direct}
that we use for the spatial discretization;
for additional details, we refer to \cite[Sec.~24--26]{martinsson2019fast}.
For concreteness, let us consider the Dirichlet boundary value problem,
\begin{equation}\label{eqn:elliptic}
	\begin{aligned}
		\Acal u(x) &= g(x) \,, &x\in \Omega, \\
		u(x) &= f(x)\,, & x\in \partial\Omega,
	\end{aligned}
\end{equation}
where $\Acal$ is a general second order elliptic differential operator 
\begin{equation}\label{eqn:operatorA}
	\begin{aligned}
		\Acal u(x) =& -c_{11}(x)\partial_1^2u(x) -2c_{12}(x)\partial_1\partial_2u(x) -c_{22}(x)\partial_2^2u(x) \\
		&+c_1(x)\partial_1 u(x) + c_2(x) \partial_2 u(x) + c(x) u(x)\,.
	\end{aligned}
\end{equation}
Here $g(x)$ is the source term and $f(x)$ the boundary data. The HPS method first partitions the domain $\Omega\subset \mathbb{R}^2$ into a hierarchical tree of subdomains $\Omega^\tau\,, \tau = 1,\ldots,N$ and uses spectral collocation methods to discretize the local elliptic operators $\Acal^\tau$. After the discretization, local solution operators and Dirichlet to Neumann (DtN) maps restricted in each subdomain are built at the bottom level, where only small matrix inversion is involved. These local operators are then used to build the global direct solvers via a hierarchical merge from the bottom level to the top level. Once the global solver is constructed, HPS can can solve \eqref{eqn:elliptic} for multiple sources and boundary data via a top-to-bottom sweep. In the implementation of HPS, we assume the mixed term $c_{12}(x)=0$ so one can ignore the corner points in Figure \ref{fig:domain}. For problems involving a nonzero mixed term $c_{12}(x)$, one can also ignore the corner points in HPS, and then apply extrapolation methods to estimate the values on corner points. More details can be found in Chapter 24 of \cite{martinsson2019fast}.

\subsection{Discretization and numerical schemes}\label{sec:discretization}
For simplicity we assume the domain $\Omega$ is rectangular. The domain is first partitioned into two children subdomains and then each children subdomain is further partitioned in a similar manner. This hierarchical partition will stop until the leaf children reach the preset square size and consequently a hierarchical tree of domains are formed. Eventually the domain is partitioned into $n_1\times n_2$ squares of the same size and each square is discretized by $p\times p$ Chebyshev nodes, see Figure \ref{fig:domain}.
\begin{figure}[htp]
	\centering
	\includegraphics[width=1.0\textwidth]{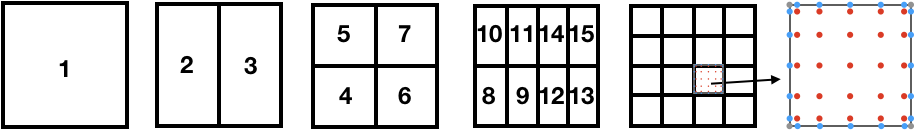}
	\caption{The domain $\Omega$ is hierarchically halved and eventually is partitioned into squares. Then each square is discretized with $p\times p$ Chebyshev nodes. }
	\label{fig:domain}
\end{figure}
We denote all collocation points by $\xsf = \{x_i\}_{i=1}^N$ and define the discretized solution $\usf = [u(x_i)]_{i=1}^N$ as the solution vector with collated values. The Chebyshev nodes on each square is classified into three groups: the interior nodes (marked in red), the boundary edge nodes (marked in blue) and the corner nodes (marked in gray). It turns out that the gray nodes does not contribute to any spectral derivatives to other nodes when there are no mixed derivatives in \eqref{eqn:operatorA}, so we dropped them off in the collocation points $\xsf $. After excluding all the corner points, the total number of points in the grid equals
\[
N = (p-2)(pn_1 n_2 + n_1 + n_2) \approx p^2 n_1 n_2\,.
\]
The discretization procedure above would introduce a sparse $N\times N$ matrix $\Asf$, 
and the value of $\Asf(i,:)\usf$ may have different values depending on the type of point $x_i$
\begin{equation}\label{eqn:sparse_A}
	\Asf(i,:) \usf \approx \begin{cases}
		[\Acal u](x_i), &\text{  for any interior points (marked in red)}\,, \\
		0, &\text{  for any edge point that is }\textit{not}\text{ on }\partial \Omega\,, \\
		\pm \frac{\partial u}{\partial n},& \text{  for any edge point that is on }\partial\Omega \,.
	\end{cases}
\end{equation}

In the HPS solver, this big sparse matrix $\Asf$ is not explicitly formed but instead the matrix-vector application of $\Asf^{-1}$ is constructed via a hierarchical sweep through all subdomains.
To explain the details, we first restrict our scope to a local subdomain $\Omega^\tau$. Denote the interior nodes index in a square $\Omega^\tau$ as $J^\tau_{i}$ and the boundary edge nodes index as $J^\tau_{b}$, the operator $\Acal$ can be locally discretized as a spectral differential matrix and likewise equation \eqref{eqn:elliptic} can be discretized in the following form for any leaf node $\tau$:
\begin{equation}\label{eqn:discretize}
	\begin{aligned}
		\begin{bmatrix}
			\Asf^\tau_{i,i} & \Asf^\tau_{i,b} \\
		\end{bmatrix}
		\begin{bmatrix}
			\usf^\tau_{i}\\
			\usf^\tau_{b}
		\end{bmatrix}
		&= \gsf^\tau_{i}, \\
		\usf^\tau_{b} &= \fsf^\tau_{b},
	\end{aligned}
\end{equation}
where the subscript denotes the values on the corresponding collocation nodes. For example, $\Asf^\tau_{i,b} = \Asf^\tau(J^\tau_{i},J^\tau_{b})$ and $\gsf^\tau_i = \gsf(J^\tau_{i})$. The resulting local problem is of small size $p$ and thus it is easy to construct the solution operator $S^\tau$ that maps the boundary data to the interior solution
\[
\usf_i^\tau = \Ssf^\tau \usf_b^\tau \,.
\]
Additionally, we can build the DtN operator $\Tsf^\tau$ that maps the boundary data $\usf^\tau_b$ to a vector $v^\tau$ consisting of the boundary fluxes 
\[
\vsf^\tau = \Tsf^\tau \usf_b^\tau \,.
\]
More precisely, if the collocation point $x_i$ is on a vertical edge, then $\vsf^\tau(i) \approx \frac{\partial \usf}{\partial x_1}(x_i)$ and if $x_i$ is on a horizontal edge, then $\vsf^\tau(i) \approx  \frac{\partial u}{\partial x_2}(x_i)$. These two operators can be computed directly from the local spectral differentiation matrix. For example, in the absence of source term $\gsf_i$, the solution operator $\Ssf^\tau$ and DtN operators are
\begin{equation}
	\label{eqn:leaf}
	\Ssf^\tau = - \Asf^{-1}_{i,i} \Asf_{i,b} \quad \text{and}\quad \Tsf^\tau = \Dsf \Ssf^\tau,
\end{equation}
where $\Dsf$ consists of spectral differentiation operators on edge nodes corresponding to $\frac{\partial}{\partial x_1}$ and $\frac{\partial}{\partial x_2}$ respectively. 


The full hierarchical HPS solver consists of two stages: a build stage that sweeps from leaf squares to its parent, and a solve stage that pass through the tree starting from the root to its leaves. In the building stage, the solution operator $\Ssf^\tau$ and DtN operator $\Tsf^\tau$ are built for each subdomain $\tau$ from leaves to roots. For leaf subdomains, they can be built directly by using equation \eqref{eqn:leaf}. 
For a parent subdomain $\tau$ with children subdomains $\alpha$ and $\beta$, $\Ssf^\tau$ and $\Tsf^\tau$ can be built by ``merging" the DtN operators $\Tsf^\alpha$ and $\Tsf^\beta$ of the children subdomains $\alpha$ and $\beta$. These process is done by Schur complements of the linear system \eqref{eqn:discretize} with respect to that of its children via the continuity of solution and fluxes on the interface. See more details in Chapter 25 of \cite{martinsson2019fast}.
For completeness, we briefly discuss below how to merge the local operators on children domains into the local operators on the parent domain. We first partition the boundary points on $\partial\Omega^\alpha$ and $\partial\Omega^\beta$ into the three sets: $J_1$, the boundary points on $\partial \Omega_\alpha/\partial \Omega_\beta$; $J_2$, the boundary points on $\partial \Omega_\beta/\partial \Omega_\alpha$; and $J_3$, the boundary points on both $\partial\Omega_\alpha$ and $\partial\Omega_\beta$ but not on $\partial\Omega_\tau$.
See Figure \ref{fig:merge} for an illustration of the boundary points. The goal of the merge process is to construct the solution operator $\Ssf^\tau$ and DtN operator $\Tsf^\tau$ on the parent node $\tau$ from the local operators $S^\alpha,T^\alpha$ and $S^\beta,T^\beta$ on its children nodes. These children local operators can be partitioned as the following.
\begin{equation}\label{eqn:local_operators_children}
\begin{bmatrix}
    \vsf_1 \\ \vsf_3
\end{bmatrix}
=
\begin{bmatrix}
    \Tsf^\alpha_{1,1} & \Tsf^\alpha_{1,3}\\
    \Tsf^\alpha_{3,1} & \Tsf^\alpha_{3,3}\\
\end{bmatrix}
\begin{bmatrix}
    \usf_1 \\ \usf_3
\end{bmatrix} \,, \quad
\begin{bmatrix}
    \vsf_2 \\ \vsf_3
\end{bmatrix}
=
\begin{bmatrix}
    \Tsf^\alpha_{2,2} & \Tsf^\alpha_{2,3}\\
    \Tsf^\alpha_{3,2} & \Tsf^\alpha_{3,3}\\
\end{bmatrix}
\begin{bmatrix}
    \usf_2 \\ \usf_3
\end{bmatrix}  \,.
\end{equation}
By definition, the local operators on the parent nodes can also partitioned analogously.
\begin{equation}\label{eqn:local_operators_parent}
    \usf_3 = \Ssf^\tau 
    \begin{bmatrix}
    \usf_1 \\ \usf_2
\end{bmatrix}\,, \quad
\begin{bmatrix}
    \vsf_1 \\ \vsf_2
\end{bmatrix}
=\Tsf^\tau 
\begin{bmatrix}
    \usf_1 \\ \usf_2
\end{bmatrix}
\end{equation}
Combining \eqref{eqn:local_operators_children} and \eqref{eqn:local_operators_parent}, we can solve for $\Ssf^\tau$ and $\Tsf^\tau$ with $\Tsf^\alpha$ and $\Tsf^\beta$ as the following.
\[
\Ssf^\tau = \left(\Tsf^\alpha_{3,3} - \Tsf^\beta_{3,3} \right)^{-1} \left[-\Tsf^\alpha_{3,1} \ |\ \Tsf^\beta_{3,2} \right] \,, \quad 
\Tsf^\tau = 
\begin{bmatrix}
    \Tsf^\alpha_{1,1} & 0 \\
    0 & \Tsf^\beta_{2,2}
\end{bmatrix} + 
\begin{bmatrix}
    \Tsf^\alpha_{1,3} \\
    \Tsf^\beta_{2,3}
\end{bmatrix}
\Ssf^\tau \,.
\]

\begin{figure}[htp]
	\centering
	\includegraphics[width=0.6\textwidth]{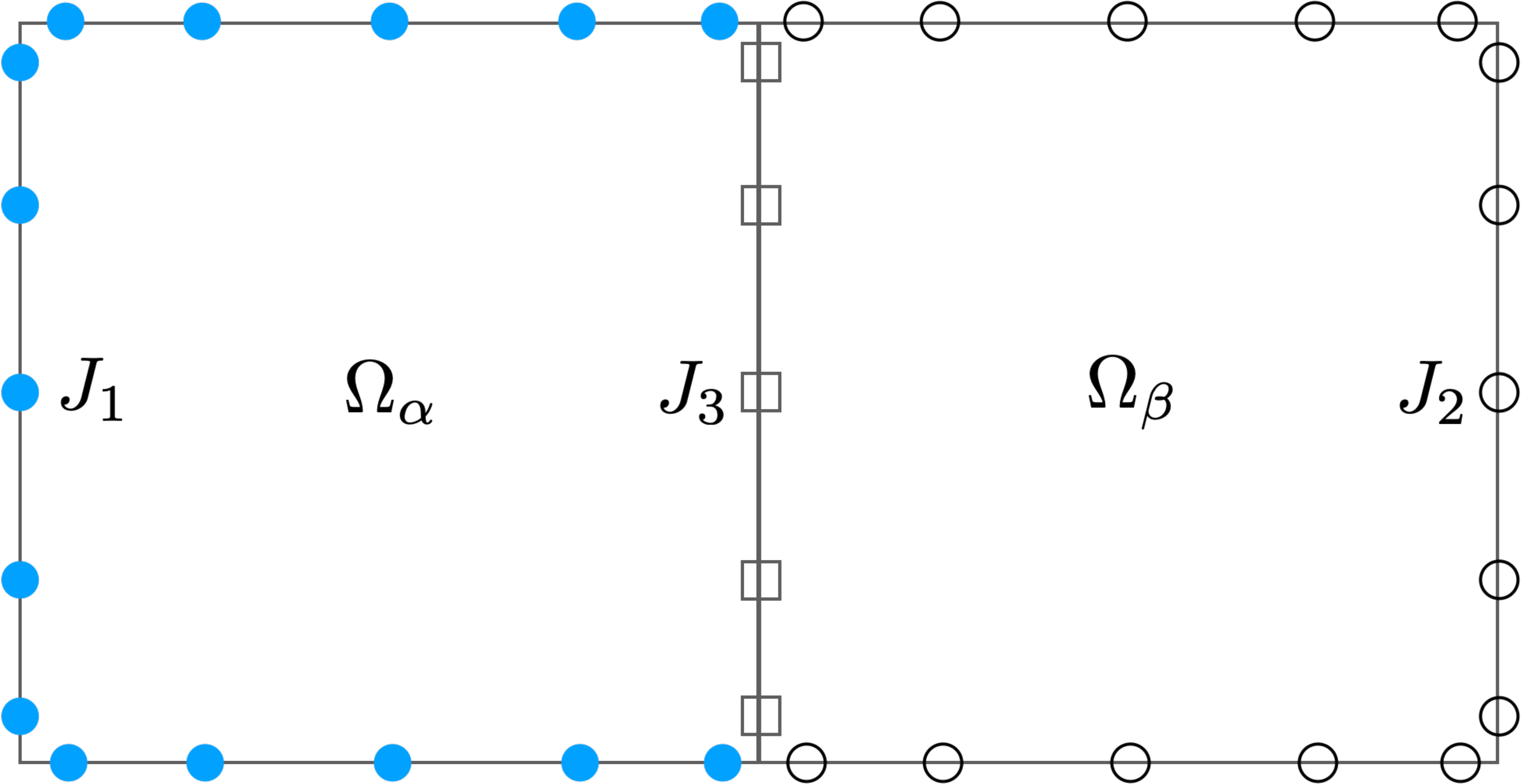}
	\caption{An illustration of boundary points on the parent domain $\Omega^\tau = \Omega^\alpha \cup \Omega^\beta$.}
	\label{fig:merge}
\end{figure}

In the solve stage, one starts from the root domain $\Omega$ and iteratively uses the solution operator to map boundary data to the interface of its children subdomains. This process would pass boundary information from parent domains to its children and consequently provide boundary data for all leaf subdomains. Then with the solution operator $\Ssf^\tau$ for all leaf subdomains, one can easily construct local solutions and glue them together into a global solution.
We summarized the solve stage of HPS in the following algorithm \ref{alg:solve} when no external force is present. 
\begin{algorithm}
	\caption{HPS solve stage with no body load.}
	\label{alg:solve}
	\begin{algorithmic}[1]
		\State $\usf(k) = f(x_k)$ for all $k\in J^1_b$.
		\For{$\tau = 1,2,\ldots,n$}
		\State $\usf(J^\tau_i) = \Ssf^\tau \usf(J^\tau_b)$.
		\EndFor
	\end{algorithmic}
\end{algorithm}

The time complexity to build all local matrices for leaves in 2D is about $(p^2)^3 = p^6$ because there are $p^2$ points for each leaf subdomains. As there are about $N/p^2$ leaf subdomains, the total cost to process all leaf subdomains is about $(p^6)(N/p^2) = p^4N$. For a parent subdomain $\tau$ at level $l$, the cost of merging process is about $2^{-2l}N^{1.5}$. Because there are $2^l$ subdomains at level $l$, the total cost at level $l$ is about $2^{-l}N^{1.5}$ and thus the total build cost in 2D is $\sum_{l=0}^{L} 2^{-l }N^{1.5} \approx N^{1.5}$. For the solve stage, the cost of applying the solution operator $S^\tau$ is about $2^{-l}N$ so the total cost of the solve stage in 2D is $\sum_{l=0}^{L-1} 2^l 2^{-l}N \approx N\log N$. We numerically verify the solve stage time complexity in Section \ref{sec:compareFEM}.

\section{The RKHPS method for time-dependent problems}\label{sec:rkhps}

We now introduce the Runge-Kutta Hierarchical Poincar\'e-Steklov (RKHPS) schemes for the  general parabolic equation \eqref{eqn:general}. These  schemes combine high order Runge-Kutta discretization and high order HPS schemes, thus they enjoy high accuracy, stability and efficiency. 
As mentioned above the HPS scheme is most efficient if the matrix $\Asf$ does not change throughout the computation and it is therefore natural to consider Explictly, Singly Diagonally Implicit Runge-Kutta (ESDIRK) methods.
\subsection{Time discretization}
In general, the right hand side of \eqref{eqn:general} can be split into a stiff part $\Lcal u$ and a non-stiff part $q + g(u)$.
The natural choice is to adopt an implicit-explicit RK (IMEX-RK) method, in which $\Lcal u$ is discretized by a method defined through an implicit Butcher table $A,b$ and $c$ whereas $g(u)$ is treated by a method defined by an explicit Butcher table $\hat{A},\hat{b}$ and $\hat{c}$. In particular, we use the ESDIRK scheme for the stiff term $\Lcal u$. These methods usually have an explicit first stage and put the same constant on the diagonal entries $a_{ii}$. 
Such structure in the Butcher table $A$ allows to have the same sparse matrix $\Asf$ in \eqref{eqn:sparse_A} in all iterations. Therefore, we can build a HPS solver once in the offline and apply the same HPS solve for all iterations.
The nonstiff term is dealt with an Explict Runge-Kutta (ERK) method, which only has non-zero entries in the lower diagonal part in the Butcher table. See Table \ref{tbl:butcher}. In particular, we used the tableau data \textbf{ARK4(3)6L[2]SA} and \textbf{ARK5(4)8L[2]SA} from the additive Runge-Kutta (ARK) methods by Carpenter and Kennedy ~\cite{kennedy2003additive}. It is assumed that the time step coefficients $c_j$ and $\hat{c}_j$ are the same in both ESDIRK and ERK method.
\begin{table}[htp]
\parbox{.45\linewidth}{
	\centering
	\begin{tabular}{c|c c c c c}
		$0$ & 0 &0 			& 				 &  $\ldots$  & 0\\
		$c_2$ &$a_{21}$ &$\gamma$ & 0  				  &  $\ldots$& 0\\
		$\vdots$& $\vdots$ & $\ddots$ & $\ddots$ &$\ddots$ & $\vdots$\\
		$c_{s-1}$ &$a_{s-1,1}$ &$a_{s-1,2}$ &$\ldots$&$\gamma$  	  & 0 \\
		$1$ &$b_1$ &$b_2$ &$\ldots$ & $b_{s-1}$ 	  & $\gamma$ \\	\hline
		&$b_1$ &$b_2$ & $\ldots$ & $b_{s-1}$ 	  & $\gamma$ 
	\end{tabular}
}
\hfill
\parbox{.45\linewidth}{
        \centering
	\begin{tabular}{c|c c c c c}
		$0$ & 0 &0 			& 				 &  $\ldots$  & 0\\
		$\hat{c}_2$ &$\hat{a}_{21}$ &0& 0  				  &  $\ldots$& 0\\
		$\vdots$& $\vdots$ & $\ddots$ & $\ddots$ &$\ddots$ & $\vdots$\\
		$\hat{c}_{s-1}$ &$\hat{a}_{s-1,1}$ &$\hat{a}_{s-1,2}$ &$\ldots$&$0$  	  & 0 \\
		$1$ &$\hat{a}_{s,1}$ &$\hat{a}_{s,2}$ &$\ldots$ & $\hat{a}_{s,s-1}$ 	  & 0 \\	\hline
		&$\hat{b}_1$ &$\hat{b}_2$ &$\ldots$ & $\hat{b}_{s-1}$ 	  & $\hat{b}_s$
	\end{tabular}
 }
	\caption{Butcher tables of Runge-Kutta methods. \textbf{Left:}  ESDIRK. \textbf{Right}: ERK}
	\label{tbl:butcher}
\end{table}


The general RK method can be usually formulated in two ways: the stage formulation or the slope formulation. These two formulations are algebraically equivalent for ODE systems but not necessarily equivalent for PDEs. In this manuscript, we implement and compare both formulations. 
The stage formulation consists of $s$ intermediate stage solves:
\begin{equation}\label{eqn:stage}
	u^n_i = u^n + \Dt \sum_{j=1}^{i} a_{ij} \Lcal  u_j^n + \Dt \sum_{j=1}^{i-1} \hat{a}_{ij} \left( q^n_j + g^n_j \right) \,,\ \ i = 1,\ldots,s,
\end{equation}
where $t^n_j = t^n + c_i \Dt$, $q^n_j = q(t^n_j,x)$ and $g^n_j=g(u^n_j)$. With a ESDIRK method shown in Table \ref{tbl:butcher}, we can assume that $a_{ii} = \gamma, i=2,\ldots,s$ and rewrite equation \eqref{eqn:stage} as the following
\begin{equation}\label{eqn:stage2}
	\begin{aligned}
		u^n_1 &= u^n \\
		(\Isf - \Dt \gamma \Lcal ) u^n_i &=  u^n + \Dt \sum_{j=1}^{i-1} a_{ij} \Lcal  u_j^n + \Dt \sum_{j=1}^{i-1} \hat{a}_{ij} \left( q^n_j + g^n_j \right)\,,\ \ i = 2,\ldots,s.
	\end{aligned}
\end{equation}
It is clear that the first stage is explicit and for other stages one needs to invert an elliptic operator $(\Isf-\Dt \gamma \Lcal)$. The equations above hold in the interior of $\Omega$ and are equipped with boundary conditions 
\begin{equation}
	u^n_i = f(t^n_i) \,, \quad \text{on  } \partial\Omega \,, \ \ i=1,\ldots,s.
\end{equation}
In comparison, the slope formulations, if there is no nonlinear term $g=0$, is composed of multiple stages where slope variables $k^n_i$ are calculated as follows
\begin{equation}\label{eqn:slope0}
	\begin{aligned}
		k^n_1 & = \Lcal u^n + q^n_j,\\
		(\Isf -\Dt \gamma \Lcal )k^n_i & = \Lcal u^n + \Dt \sum_{j=1}^{i-1} a_{ij} \Lcal k^n_j \,, \ \ i = 2,\ldots,s.
	\end{aligned}
\end{equation}
The semi-discretization
above need to be augmented with suitable boundary conditions. 

	Let $E$ be the diagonal matrix that is 1 at boundary DOF and zero everywhere else. Suppose we want to solve the PDE $v_t = v_{xx} + F^{\rm E}(v)$, with boundary conditions $v = v_{\rm BC}(t)$ using a semi-discretization
	\[
	u_t = D_2 u + \tau E (u - v_{\rm BC}(t)) + F^{\rm E}(u). 
	\]
	Here the boundary conditions are enforced weakly by the penalty term. Denoting $F^{\rm I}(u) = D_2 u + \tau E (u - v_{\rm BC}(t))$ we have that 
	\[
	F^{\rm I}(u) = u_t - F^{\rm E}(u).
	\]
	We consider the first stage in an IMEX method. Given the current solution $u_{n-1}$ it is:
	\begin{enumerate}
		\item Set $K^{\rm E}_1 = F^{\rm E} (u_{n-1})$.
		\item Solve $$
		K_1^{\rm I} = F^{\rm I}(u_1) \equiv F^{\rm I}(u_{n-1} + \Delta t A^{\rm I}_{1,1} K^{\rm I}_1 + \Delta t A^{\rm E}_{2,1} K^{\rm E}_1).$$
		\item Using the explicit expression for $F^{\rm I}$ we only use the penalty term for $K^{\rm I}_1$ and find
		\[
		K_1^{\rm I} = D_2 u_1 + \Delta t A^{\rm I}_{1,1} D_2 K^{\rm I}_1  + \tau E (K^{\rm I}_1 - R)+ \Delta t A^{\rm E}_{2,1} D_2 K^{\rm E}_1.
		\]
		Here we can use the PDE, $F^{\rm I}(u) = u_t - F^{\rm E}(u)$, to find 
		\[
		R = \frac{d v_{\rm BC}(t)}{dt} - E F^{\rm E}(u_i). 
		\]
		As we now have additional unknowns on the boundary from $E F^{\rm E}(u_i)$ we must add the equations 
		$$
		E(K^{E}_{2} - F^{\rm E}(u_i)) = 0.
		$$
	\end{enumerate}
	   A natural choice when $q=0$ is to set $k^n_i = u_t(t^n_i)$ at the boundary as the slope $k^n_i$ can be interpreted as an approximation to the time derivatives of the solution. At the end, the one step approximation $u^{n+1}$ can be calculated explicitly by assembling the slope variables $k^n_i$
	\[
	u^{n+1} = u^n + \Dt \sum_{j=1}^n b_j k^n_j.
	\]
	When the nonlinear term $g$ is present, another explicit slope variable $l^n_i$ is introduced. The intermediate slope variables $k^n_i$ and $l^n_i$ are computed as follows:
	\begin{equation}\label{eqn:slope}
		\begin{aligned}
			k^n_1 & = \Lcal u^n + q^n_j,\\
			l^n_1 &=  g^n_j,\\
			(\Isf -\Dt \gamma \Lcal )k^n_i & = \Lcal u^n + \Dt \sum_{j=1}^{i-1} a_{ij} \Lcal k^n_j + \Dt \sum_{j=1}^{i-1} \hat{a}_{ij}   \Lcal l^n_j + q^n_i\,, \ \ i = 2,\ldots,s,\\
			l^n_i &= g(u^n+\Dt \sum_{j=1}^{i}a_{ij}k^n_j + \Dt \sum_{j=1}^{i-1} \hat{a}_{ij} l^n_j ) \,, \ \ i=2,\ldots,s.
		\end{aligned}
	\end{equation}
	Similarly, the one step approximation can be assembled as
	\[
	u^{n+1} = u_n + \Dt \sum_{j=1}^n b_j k^n_j + \Dt \sum_{j=1}^n \hat{b}_j l^n_j.
	\]
	However, a major challenge is to design suitable boundary conditions for the intermediate slopes $k^n_i$ and $l^n_i$.
	As there are no clear interpretation of these individual slope variables on the boundary, it is only possible to assign suitable boundary conditions for limited situations. For instance, zero boundary conditions can be assigned for both $k^n_i$ and $l^n_i$ if BC of the PDE is time-independent.

	\subsubsection{Implicit and explicit computation}
	Notice that in either the stage or slope formulations above, there are two types of equations that need to be solved, i.e. implicit elliptic equations in the form
	\[
	(\Isf - \Dt \gamma \Lcal) u^\tau  = \text{known RHS} \quad \text{on  } \Omega^\tau  \,,
	\] 
	or explicit equations in the form
	\[
	u = \text{known RHS}\quad \text{on  } \Omega^\tau  \,,
	\]
	where $u$ denotes the stage variables $u^n_i$ or slope variables $k^n_i,l^n_i$ respectively. Both the implicit and explicit equations are equipped with Dirichlet boundary conditions and hold on all subdomains. As the identity operator can be interpreted as an elliptic operator, both the implicit and explicit equations can be solved on the hierarchical tree by following standard HPS methods. However, for explicit equations alone, one can explicitly form the global linear sparse system and directly solve it without using HPS method. We have tested and compare both implementations in the arXiv report and found that there are only machine precision difference between directly solving sparse system and using HPS method.
	
	\subsubsection{Boundary conditions}
	As the unknowns $u^n_i$, $k^n_i$ and $l^n_i$ are approximating the solutions and the slope variables respectively, we need to assign boundary conditions in different ways in different formulations correspondingly. For the stage formulation, a natural choice is to assign $u^n_i = u(x,t^n+c_i\Dt)$ for $x\in \partial \Gamma$. However, such treatment may suffer from order reduction \cite{rosales2017order} as shown in the numerical results. For the slope formulation, if only $k^n_i$ is present, the natural choice is to assign $k^n_i = u_t(x,t^n+c_i\Dt)$ for $x\in \partial \Gamma$ because $k^n_i$ is approximating the time derivative of the solutions. In the case when both the implicit slope $k^n_i$ and explicit slope $l^n_i$ are both present, there is no clear relation between them and the solution, thus only limited cases are applicable. For example, if equation \eqref{eqn:general} is equipped with time-independent BC, then zero boundary Dirichlet conditions can be assigned to the slope variables.
	
	To deal with Neumann and Robin boundary conditions, the HPS method uses the pre-computed DtN operators to map them to Dirichlet boundary conditions. More details can be found in \cite{babb2018accelerated}.
	
	\subsubsection{Penalization in the slope formulation}
	The HPS method inherently enforces continuity of the fluxes across the interfaces of children subdomains. Such feature, however in the slope formulation, does not guarantee that the solution has a continuous flux across the interface. As a trivial example, if only $k^n_i$ is present, then the updating formula
	\[
	u^{n+1} = u^n + \Dt \sum_{j=1}^{s} b_j k^n_j \,,
	\]
	will pass any flux mismatch from the previous solution $u^n$ to the next step.
	
	An easy fix of this can be projecting the solution into the continuous flux space. However, for greater generality, we enforce an penalization on flux jump in the slope formulation. That is, the zero flux jump condition in HPS
	\[
	[[Tk + h^k]] = 0 \,,
	\]
	is replaced by a penalized version
	\[
	[[ Tk + h^k - \Dt^{-1} h^u]] = 0 \,,
	\]
	where $Tk$ denotes the derivative from the homogeneous part, $h^k$ denotes the flux of particular slope and $h^u$ denotes the flux of the solution $u$. This new penalized condition modifies the merging process by adding an extra term.

	\section{Stability of RKHPS}\label{sec:stable}
	In this section, we seek to shed light on the stability properties of RKHPS. We establish that the time-stepping map $u^n\rightarrow u^{n+1}$ is stable for the particular case of the heat equation discretized with HPS in space, and backwards Euler in time. For higher order discretization, analysis appears to be challenging, but we present numerical evidence that point strongly towards the conclusion that RKHPS is stable up to order five.
	\subsection{Eigenvalues of the local differentiation matrix}
	The HPS uses spectral collocation method to discretize each subdomain and the corresponding local differentiation matrix are approximating the second order elliptic operator whose eigenvalues are negative. In particular, the eigenvalues of the continuous second derivative with zero boundary conditions are defined as
	\[
	\begin{aligned}
		D^2 u(x) = \lambda u(x)\,, &\quad -1\leq x \leq 1 \\
		u(\pm 1) = 0\,.&
	\end{aligned}
	\]
	The eigenvalues of this continuous problem are known to be $\lambda_k = -\frac{k^2 \pi^2}{4}$. The Chebychev collocation methods considers $u$ as $(p-1)$-th order polynomials such that the above equation holds at Chebyshev points $x_j = \cos(\frac{j\pi}{p-1})$. Such discretization yields the spectral differentiation matrix, which approximates the first $\frac{2}{\pi}$ portion of the eigenvalues very well but there is an $\mathcal{O}(p^4)$ error for the remaining eigenvalues. It is shown in \cite{GottliebLustman,weideman1988eigenvalues} that the eigenvalues of spectral differentiation matrix are real, negative and distinct. 
	
	\subsection{Stability of RKHPS}
	We now prove the stability of RKHPS for linear heat equation in the following form:
	\begin{equation}\label{eqn:heat}
		\begin{cases}
			u_t = \Delta u + f \,, \quad &x\in \Omega, \\
			u   = g\,,\quad & x\in \Gamma, 
		\end{cases}
	\end{equation}
	where $f$ is the external force. 
	\begin{theorem}
		In dimension two, the eigenvalues of the time-stepping map $\mathsf{M}^n: \usf^n\mapsto \usf^{n+1}$ of backward Euler HPS method for heat equation \eqref{eqn:heat} has modulus bounded by $1$ for any time step size $h$. In particular, backward Euler is Lax-Richtmyer stable.
	\end{theorem}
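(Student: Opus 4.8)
The plan is to make the time-stepping map explicit and then to locate its spectrum. Backward Euler applied to \eqref{eqn:heat} advances $\usf^n \mapsto \usf^{n+1}$ by solving the shifted elliptic problem $(\Isf - h\Delta)\usf^{n+1} = \usf^n + h\fsf^{n+1}$ with Dirichlet data $g$ on $\Gamma$, and HPS performs this solve exactly. Since Lax--Richtmyer stability concerns the homogeneous propagator I would take $f = 0$ and $g = 0$, so that $\usf^{n+1} = \mathsf{B}_h^{-1}\mathsf{R}\,\usf^n$, where $\mathsf{B}_h$ is the global sparse matrix that HPS implicitly inverts for $\Isf - h\Delta$ with homogeneous boundary conditions and $\mathsf{R}$ embeds $\usf^n$ into the interior-node rows. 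The rows of $\mathsf{B}_h$ are $\Isf - h\Dsf_2$ on leaf-interior nodes, flux continuity on interfaces (independently of $h$), and the identity on boundary nodes. The interface and boundary components of $\usf^n$ lie in $\ker\mathsf{R}$, so the nonzero spectrum of $\mathsf{M}$ coincides with the spectrum of $(\Isf - h\wt{\Lsf})^{-1}$, where $\wt{\Lsf}$ is the $h$-independent effective HPS discretization of the Dirichlet Laplacian obtained by eliminating the interface and boundary unknowns --- exactly the static condensation carried out by the hierarchical merge of Section~\ref{sec:hps}.

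The second step converts the claimed bound into a statement about $\wt{\Lsf}$. If $\mu \in \mathrm{spec}(\wt{\Lsf})$, the matching eigenvalue of $\mathsf{M}$ is $(1-h\mu)^{-1}$, and $|1-h\mu|^2 = 1 - 2h\,\mathrm{Re}\,\mu + h^2|\mu|^2 \ge 1$ holds for all $h>0$ precisely when $\mathrm{Re}\,\mu \le 0$, strictly whenever $\mu\neq0$. Hence it suffices to show (i) $0 \notin \mathrm{spec}(\wt{\Lsf})$ (the discrete Dirichlet Laplacian is invertible), so that $1\notin\mathrm{spec}(\mathsf{M})$, and (ii) $\mathrm{spec}(\wt{\Lsf}) \subset \{\,z : \mathrm{Re}\,z \le 0\,\}$. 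Together these give $|\lambda(\mathsf{M})| < 1$ strictly for every finite $h$, which already yields power-boundedness of $\mathsf{M}$ and therefore Lax--Richtmyer stability. Item (ii) carries the real content.

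For (ii) I would work leaf-by-leaf and then through the merges. On a single square leaf the Laplacian with homogeneous Dirichlet data is the Kronecker sum $\Dsf_2 \otimes \Isf + \Isf \otimes \Dsf_2$ of the one-dimensional Chebyshev second-derivative matrix; by the cited results of \cite{GottliebLustman,weideman1988eigenvalues} the eigenvalues of $\Dsf_2$ are real, negative and distinct, so $\Dsf_2$ is similar to a real negative diagonal matrix --- hence dissipative in the inner product induced by its eigenbasis --- and the Kronecker sum is dissipative in the associated tensor-product inner product, with real negative spectrum. It then remains to show that the hierarchical merge, which couples two children by imposing continuity of the Poincar\'e--Steklov flux along their shared interface, cannot move the spectrum into $\{\,\mathrm{Re}\,z>0\,\}$. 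This is the discrete analogue of the classical fact that assembling element contributions, each dissipative in its own energy, produces a globally dissipative operator; I would establish it either by transporting a (leaf-dependent, scale-aware) weighted inner product through the merges and checking that it remains a genuine inner product on the assembled space under which $\wt{\Lsf}$ is dissipative, or --- equivalently and perhaps more cleanly --- by a direct discrete energy estimate: testing $(\Isf-h\Delta)\usf^{n+1} = \usf^n$ against $\usf^{n+1}$ in that inner product, using that the HPS flux-continuity conditions render the discrete Dirichlet form nonnegative, and concluding $\|\usf^{n+1}\| \le \|\usf^n\|$.

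The main obstacle is precisely this merge step. Chebyshev collocation differentiation matrices are strongly non-normal, so dissipativity is available only in a non-standard, discretization-dependent inner product, and the crux is a compatibility statement: the weighted inner products attached to two neighbouring leaves must agree along their common edge so that the Schur complements performed inside the DtN merges preserve dissipativity on the combined degrees of freedom --- equivalently, that the merged operator $\Tsf^\alpha_{3,3} - \Tsf^\beta_{3,3}$ inherits the positivity of the children's Dirichlet-to-Neumann maps for the coercive operator $\Isf-h\Delta$. Granting that compatibility, step two closes the proof and, via the energy estimate $\|\usf^{n+1}\|\le\|\usf^n\|$, delivers the ``in particular'' clause as well.
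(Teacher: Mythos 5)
Your overall strategy --- reduce the time-stepping map to the resolvent $(\Isf - h\wt{\Lsf})^{-1}$ of an $h$-independent statically condensed operator and then localize $\mathrm{spec}(\wt{\Lsf})$ in the closed left half-plane --- is coherent in outline, but the argument has a genuine gap exactly where you flag ``the main obstacle'': you never establish that the hierarchical merge preserves dissipativity, and you close with ``granting that compatibility.'' That compatibility is the entire content of the theorem. On a single leaf the operator $\Esf\otimes\Isf+\Isf\otimes\Esf$ is indeed similar to a negative real diagonal matrix, but only through the severely ill-conditioned eigenvector matrix $\Vsf_\Esf\otimes\Vsf_\Esf$; the weighted inner products in which neighbouring leaves are dissipative do not agree along the shared interface, and there is no off-the-shelf nonnegative discrete Dirichlet form for Chebyshev collocation that is compatible with the HPS flux-continuity conditions. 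So neither of your two proposed closing moves (transporting an inner product through the merges, or a global energy estimate) is available without substantial new work, and without one of them the proof does not close. Two smaller points: item (i) is not needed for the stated bound (modulus \emph{bounded by} $1$ tolerates an eigenvalue at $1$); and an eigenvalue bound $|\lambda|<1$ for a fixed grid gives power-boundedness only with a constant that may grow under refinement, so by itself it does not deliver Lax--Richtmyer stability --- for that you need the norm estimate you defer to the unproven energy argument.

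The paper avoids the obstruction entirely by never forming or diagonalizing an assembled operator. It splits the local solve on each subdomain $\Omega^\tau$ into a particular part $\wsf^\tau$ (zero local Dirichlet data) and a homogeneous part, and proves two separate facts: (a) the map sending a parent's particular solution to its children's interior values has the explicit block form $\Tsf$ in \eqref{eqn:parentrelation}, with identity blocks on the child-interior columns, and satisfies $\Tsf\Tsf^\top-\Isf\succeq 0$, from which the paper concludes that descending one level of the tree cannot decrease the sum of squared Euclidean norms; and (b) at the leaf level the resolvent $(1+h\Lsf(J^\tau_i,J^\tau_i))^{-1}$ is a contraction because the leaf eigenvalues are real and negative. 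Chaining (a) from the root down to the leaves and then applying (b) yields $\|\usf^{n+1}\|\le\|\usf^n\|$ in the plain Euclidean norm, i.e.\ a bound on the operator norm of $\mathsf{M}^n$ itself, which gives both the eigenvalue claim and the stability claim at once. The merge-compatibility statement your route requires is essentially equivalent to the paper's step (a), so if you wish to complete your argument the most economical fix is to adopt that particular/homogeneous decomposition and the parent-to-child norm inequality directly.
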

	\begin{proof}
		For stability proof of \eqref{eqn:heat}, it suffices to assume $f = g= 0$. The Backward Euler time discretization for a certain Runge-Kutta scheme yields the following semi-continuous PDE:
		\begin{equation}
			\begin{cases}
				(\Isf+h \Delta) u^{n+1} =  u^n \,,\quad & x\in \Omega, \\
				u^{n+1} = 0 \,, \quad & x\in \Gamma. 
			\end{cases} \,
		\end{equation}	
		In HPS method of the above semi-continous equation, the domain $\Omega$ is decomposed into a hierarchical sequence of subdomains:
		\begin{equation}\label{eqn:localEqn}
			\begin{cases}
				(\Isf+h \Delta) u^{n+1,\tau} =  u^n \,,\quad & x\in \Omega^\tau, \\
				u^{n+1,\tau} = f^\tau \,, \quad & x\in \Gamma^\tau, 
			\end{cases} 
		\end{equation}	
		where $f^\tau$ is the unknown boundary condition over each subdomain.
		Consider the local problem \eqref{eqn:localEqn} with fixed $\tau$, we decompose the local solutions  into particular solution $w^{n+1,\tau}$ and homogeneous solution $\phi^{n+1,\tau}$,
		\[
		u^{n+1,\tau} = w^{n+1,\tau} + \phi^{n+1,\tau} \,.
		\]
		
		They satisfy the following equations respectively. 
		\begin{equation}\label{eqn:decomp}
			\begin{cases}
				(\Isf+h \Delta) w^{\tau} = u^n \,,\quad & x\in \Omega^\tau, \\
				w^{\tau} = 0 \,, \quad & x\in \Gamma^\tau, 
			\end{cases} 
			\quad \text{  and  } \quad 
			\begin{cases}
				(\Isf+h \Delta) \phi^{\tau} =  u^n \,,\quad & x\in \Omega^\tau, \\
				\phi^{\tau} = f^\tau \,, \quad & x\in \Gamma^\tau. 
			\end{cases} 
		\end{equation}
		We omit the supscript $^{n+1}$ in above equations and the proof below when no confusion occurs. 
		In the HPS method, each leaf node $\tau$ is discretized with $p\times p$ Chebyshev points. We denote the indices of all Chebyshev points in $\Omega^\tau$ as $J^\tau$, the interior points as $J^\tau_i$ and the boundary points as $J^\tau_b$. Then the operator $(\Isf + h\Delta)$ is discretized as the $2$nd order Chebyshev differentiation matrix $\Lsf$ and globally it is discretized as a large sparse matrix $\Asf$. (see details in section \ref{sec:discretization}).
		
		Now consider a parent node $\tau$ with children $\alpha$ and $\beta$, equation \eqref{eqn:decomp} implies that the particular solution on the left children must satisfy that
		\[
		\Asf(J^\alpha_i,J^\alpha) \wsf^\alpha = \usf^n(J^\alpha)\,.
		\] 
		Similarly, equation \eqref{eqn:localEqn} implies that
		\[
		\Asf(J^\alpha_i,J^\tau) \wsf^\tau = \usf^n(J^\alpha)\,.
		\]
		Therefore, we must have
		\[
		\Asf(J^\alpha_i,J^\alpha) \wsf^\alpha = \Asf(J^\alpha_i,J^\tau) \wsf^\tau = \Asf(J^\alpha_i,J^\alpha) \wsf^\tau(J^\alpha),
		\]
		where the last equality holds because of the sparsity of matrix $\Asf$. In fact, the values of $\Asf(J^\alpha_i,J^\tau) \wsf^\tau$ depends only on the nodal points within $\Omega^\alpha$ rather than that in $\Omega^\tau$.
		Decomposing the index $J^\alpha = [J^\alpha_i,J^\alpha_b]$, we can rewrite the above equation as
		\[
		\Asf(J^\alpha_i,J^\alpha_i) \wsf^\alpha(J^\alpha_i)  = \Asf(J^\alpha_i,J^\alpha_i) \wsf^\tau(J^\alpha_i) + \Asf(J^\alpha_i,J^\alpha_b) \wsf^\tau(J^\alpha_b),
		\]
		where we used the fact that $\wsf^\alpha(J^\alpha_b) = 0$. Invert the square matrix $\Asf(J^\alpha_i,J^\alpha_i)$, we have
		\[
		\wsf^\alpha(J^\alpha_i)  = \wsf^\tau(J^\alpha_i) + \left(\Asf(J^\alpha_i,J^\alpha_i)\right)^{-1}\Asf(J^\alpha_i,J^\alpha_b) \wsf^\tau(J^\alpha_b).
		\]
		Analogously for child $\beta$, we have
		\[
		\wsf^\beta(J^\beta_i)  = \wsf^\tau(J^\beta_i) + \left(\Asf(J^\beta_i,J^\beta_i)\right)^{-1}\Asf(J^\beta_i,J^\beta_b) \wsf^\tau(J^\beta_b).
		\]
		\begin{figure}
			\centering
			\includegraphics[width=0.6\textwidth]{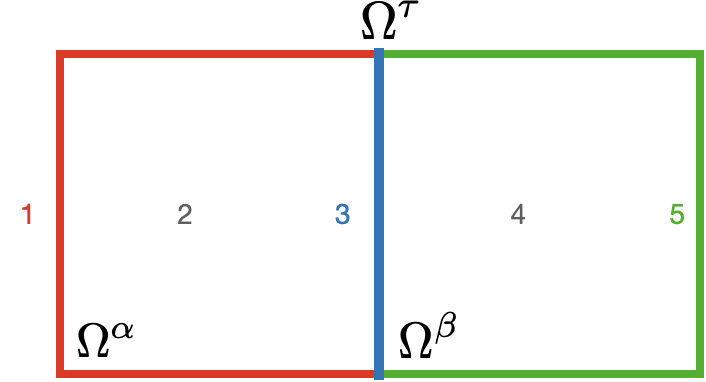}
			\caption{The vector $J^\tau$ is partitioned into five blocks.}
			\label{fig:indexDecomp}
		\end{figure}
		To illustrate the structure of the above two equations, we partition the indices $J^\tau$ into five blocks (see an illustration in Figure \ref{fig:indexDecomp})
		\begin{equation}\label{eqn:parentrelation}
			\begin{bmatrix}
				\wsf^\alpha(J^\alpha_i) \\
				\wsf^\beta(J^\beta_i)
			\end{bmatrix}
			= \begin{bmatrix}
				\Tsf_{11} & \mathbf{I} & \Tsf_{13} & 0 & 0 \\
				0&0&\Tsf_{23} &\mathbf{I} & \Tsf_{25}
			\end{bmatrix}
			\wsf^\tau 
			:= \Tsf \wsf^\tau \,,
		\end{equation}
		where $\Tsf_{11},\Tsf_{13}$ are submatrices of $\left(\Asf(J^\alpha_i,J^\alpha_i)\right)^{-1}\Asf(J^\alpha_i,J^\alpha_b)$ and analogously $T_{23},T_{25}$ are submatrices of  $\left(\Asf(J^\beta_i,J^\beta_i)\right)^{-1}\Asf(J^\beta_i,J^\beta_b)$. 
		Such structure of matrix $\Tsf$ guarantees that itself has singular values bounded below by $1$. In fact, we have
		\[
		\Tsf\Tsf^\top - \Isf= \begin{bmatrix}
			\Tsf_{11}\Tsf_{11}^\top + \Tsf_{13}\Tsf_{13}^\top &  \Tsf_{13}\Tsf_{23}^\top \\
			\Tsf_{23}\Tsf_{13}^\top &  \Tsf_{23}\Tsf_{23}^\top + \Tsf_{25}\Tsf_{25}^\top 
		\end{bmatrix}
		=
		\begin{bmatrix}
			\Tsf_{11}\Tsf_{11}^\top & \\
			& \Tsf_{25}\Tsf_{25}^\top
		\end{bmatrix}
		+
		\begin{bmatrix}
			\Tsf_{13}\\
			\Tsf_{23}
		\end{bmatrix}
		\begin{bmatrix}
			\Tsf_{13}^\top & \Tsf_{23}^{\top}
		\end{bmatrix},
		\]
		which is the sum of two semi-positive definite matrices. Therefore \eqref{eqn:parentrelation} implies that for any parent node $\tau$ with children $\alpha$ and $\beta$:
		\[
		\| \wsf^\alpha \|^2 + \| \wsf^\beta\|^2 = \| \wsf^\alpha(J^\alpha_i) \|^2 + \| \wsf^\beta(J^\beta_i )\|^2 \geq \| \wsf^\tau \|^2.
		\]
		Apply the above inequality hierarchically for all parent nodes $\tau$, we have
		\begin{equation}\label{eqn:recursive}
			\| \usf^1\|^2 = \| \wsf^1 \|^2 \leq \| \wsf^2\|^2 + \|\wsf^3\|^2 \leq \cdots \leq \sum_{\tau \text{  is leaf}} \| \wsf^\tau \|^2,
		\end{equation}
		where the summation in the last inequality is over all leaf nodes $\tau$.
		For any leaf $\tau$, the Chebyshev discreization of equation \eqref{eqn:decomp} implies that
		\begin{equation}\label{eqn:localupdate}
			\wsf^\tau(J^\tau_i) = (1 +h \Lsf(J^\tau_i,J^\tau_i))^{-1} \usf^n(J^\tau_i) \,,
		\end{equation}
		where $\Lsf$ is the 2D second order Chebyshev differentiation matrix. 
		It is shown  in \cite{GottliebLustman} that the 1D Chebyshev differentiation matrix $\Esf$ has real, distinct and negative eigenvalues $\sigma_{E,i}$. Notice that $\Lsf(J^\tau_i,J^\tau_i) = \Esf\otimes \Isf + \Isf \otimes \Esf$, we conclude that $\Lsf(J^\tau_i,J^\tau_i)$ is also diagonalizable. In fact, assuming the eigen-decomposition $\Esf = \Vsf_E \Sigma_E \Vsf_E^{-1}$, we must have
		\begin{equation}
			\begin{aligned}
				(\Vsf_\Esf^{-1}\otimes \Vsf_\Esf^{-1}) \Lsf(J^\tau_i,J^\tau_i) (\Vsf_\Esf\otimes \Vsf_\Esf) &=(\Vsf_\Esf^{-1}\otimes \Vsf_\Esf^{-1}) (\Esf\otimes \Isf + \Isf \otimes \Esf) (\Vsf_\Esf\otimes \Vsf_\Esf)  \\
				&= (\Vsf_\Esf^{-1}\otimes \Vsf_\Esf^{-1}) \left((\Vsf_\Esf \Sigma_E \Vsf_\Esf^{-1})\otimes \Isf + \Isf \otimes (\Vsf_\Esf \Sigma_E \Vsf_\Esf^{-1})\right) (\Vsf_\Esf\otimes \Vsf_\Esf) \\
				&=\Sigma_E \otimes \Isf  + \Isf \otimes \Sigma_E.
			\end{aligned}
		\end{equation}
		This implies the eigenvalues of $\Lsf(J^\tau_i,J^\tau_i)$ are pairwise sum $\sigma_{E,i} + \sigma_{E,j} < 0$ and the eigenvectors are pairwise Kronecker product $\Vsf_{\Esf,i}\otimes \Vsf_{\Esf,j}$.
		
		Consequently, there exists the eigen-decomposition of $\Lsf(J^\tau_i,J^\tau_i)$ for any leaf $\tau$,
		\[
		\Lsf(J^\tau_i,J^\tau_i) = \Vsf^\tau_i\Sigma^\tau_i (\Vsf^\tau_i)^{-1} \,,
		\]
		where $\Sigma^\tau_i $ is a diagonal matrix with negative entries and $\Vsf^\tau_i$ contains the eigenvectors. Plug it into equation \eqref{eqn:localupdate}, we have
		\begin{equation}
			\wsf^\tau(J^\tau_i) =\Vsf^\tau_i (1 + h \Sigma^\tau_i )^{-1}(\Vsf^\tau_i)^{-1} \usf^n(J^\tau_i) \,.
		\end{equation}
		Because $\Sigma_i^\tau$ are negative, the L-stability of Euler methods implies that the entries of $(1+h \Sigma^\tau_i )^{-1}$ must have modulus smaller than $1$, therefore
		\[
		\| \wsf^\tau \|^2 = \| \wsf^\tau(J^\tau_i) \|^2 \leq \| \usf^n(J^\tau_i) \|^2 \,.
		\]
		Now combine the above equation with \eqref{eqn:recursive}, we have
		\[
		\| \usf^{n+1}\|^2 = \|\usf^1\|^2 \leq \sum_{\tau \text{  is leaf}}  \| \usf^n(J^\tau_i) \|^2 \leq \|\usf^n\|^2,
		\]
		which implies $\| \mathsf{M}^n \| \leq 1$ and thus the stability of RKHPS.
	\end{proof}
	\begin{remark}
		For general parabolic equations, unfortunately there is no guarantee that spectral approximation to the elliptic operator has real negative eigenvalues. For example, in the strong convection regime or high frequency regime of Helmholtz type operator, the spectral approximation may have imaginary or real positive eigenvalues and in those cases there is no guarantee for stability of RKHPS. Numerical result shows that the RKHPS method is stable for partial differential equations in which the elliptic operator is dominating though.
	\end{remark}
	
	In Figure \ref{fig:varstab} and \ref{fig:varstab2} we have plotted the eigenvalues of the time-stepping map for 1D variable coefficient parabolic equation \eqref{eqn:varelliptic} without or with a HPS hierarchical tree structure. We see that in both cases the RKHPS has eigenvalues bounded by $1$, regardless of the time-step size $\Dt= 1,10^{-2},10^{-4},10^{-6}$. Moreover, for the RKHPS with hierarchical tree, we set the depth of tree $L = 3$ and consequently there are $8$ leaf nodes. By comparing Figure \ref{fig:varstab} and \ref{fig:varstab2}, we see that the tree structure introduces $7$ zero eigenvalues. As demonstrated in the proof stability, such phenomena stems from the continuity flux assumptions in the HPS methods and the number of zero eigenvalues equals to the total number of interfaces. In fact, it can shown that the null space of the time-stepping map consists of functions that are supported on the interfaces. 
	
	\begin{figure}[ht]
		\begin{subfigure}{.5\textwidth}
			\centering
			\includegraphics[width=0.9\linewidth]{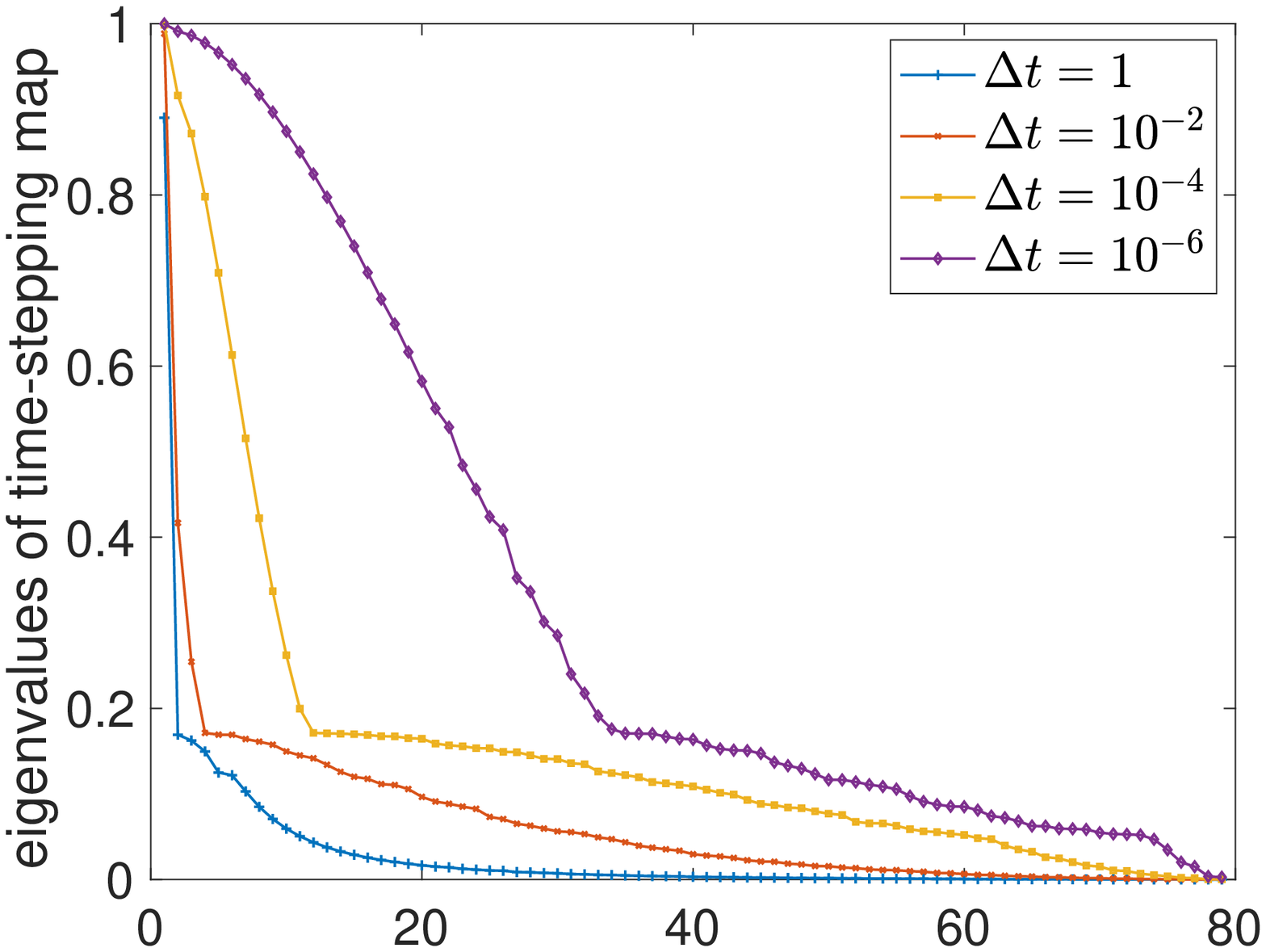}
			\caption{with no hierarchical tree structure}
			\label{fig:varstab}
		\end{subfigure}
		\begin{subfigure}{.5\textwidth}
			\centering
			\includegraphics[width=0.9\linewidth]{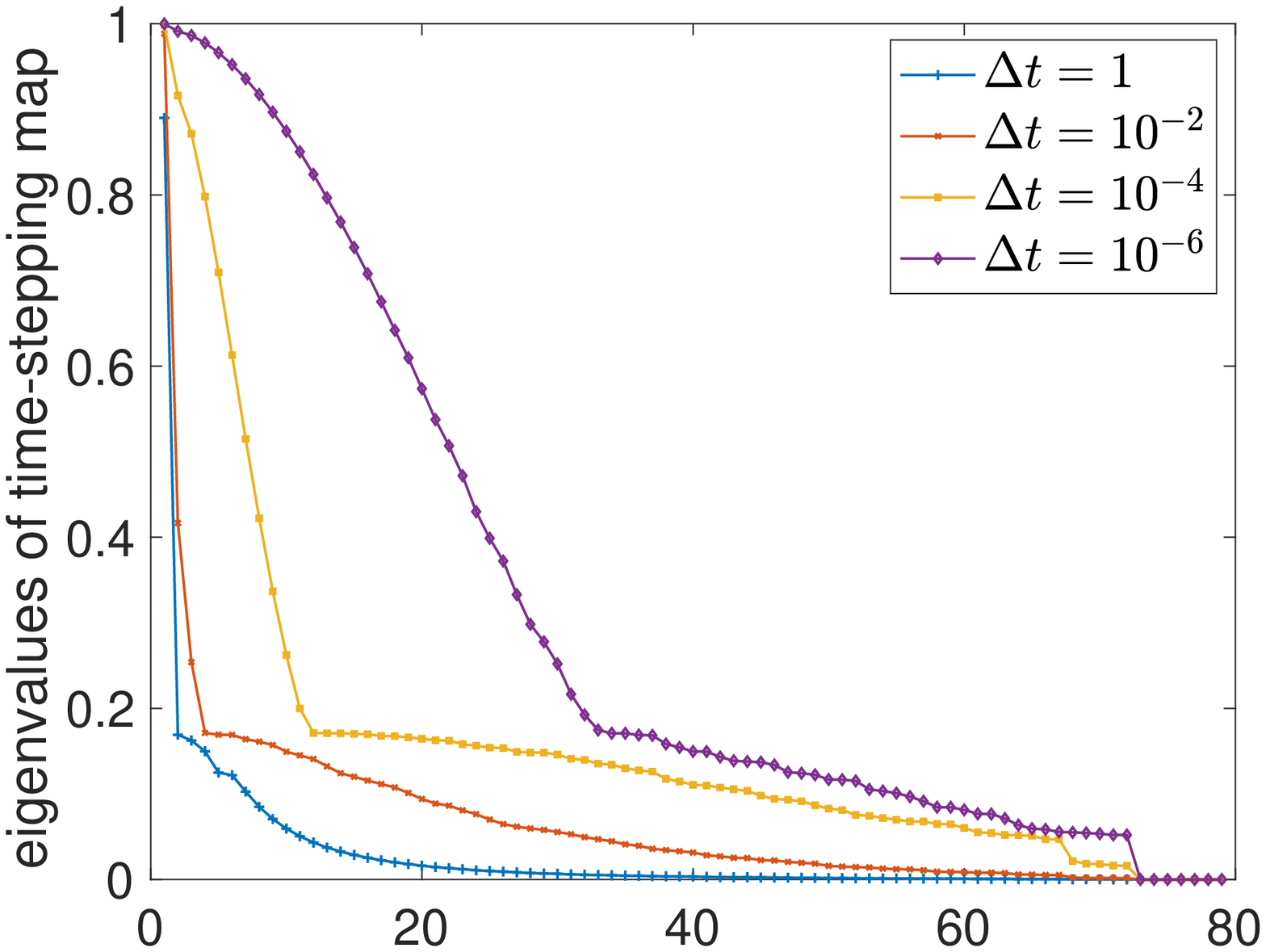}
			\caption{with $8$ leaf nodes in the hierarchical tree}
			\label{fig:varstab2}
		\end{subfigure}
		\caption{Eigenvalues of the time-stepping map for 1D variable coefficient parabolic equation}
	\end{figure}
	
	\section{Numerical tests}\label{sec:numeric}
	
	\subsection{1D convection diffusion equation}
	We then test the convection-diffusion equation with variable convection-coefficient.
	\begin{equation}
		u_t = u_{xx} - k \sin(1 + 1.9\pi x) u_x + q
	\end{equation}
	with initial and boundary conditions
	\begin{equation}\label{eqn:ICBC}
		\begin{aligned}
			u(x,0) &= \sin(1 + 1.7\pi x)\cos(1)\,, &\quad  \text{for all  }x\in[0,2]\\
			u(x,t) &= \sin(1+1.7\pi x)\cos(1 + t^2x)(1+t^3x)\,,&\quad \text{for all  }t\in[0,0.5] \text{  and  } x = 0\text{  or  }2
		\end{aligned}
	\end{equation}
	We first consider the case with no external source $q=0$. We discretize the domain with $32$ leaves and $p=21$ on each leaf node. The approximate solution is computed using a ARK4(3)6L[2]SA-ESDIRK method in \cite{kennedy2003additive}.
	In Figure \ref{fig:diffsoln} and \ref{fig:convsoln} we plot time stamps of approximate solution with $k=1$ and $k=100$. In the strong diffusion regime $k=1$, the solution profiles \ref{fig:diffsoln} are similar to that of the heat equation. In comparison, in the strong convection regime $k=100$, the solution in Figure \ref{fig:convsoln} quickly forms shocks at point $x=0.3588$ and $x=1.4114$. These two points are exactly where the convection coefficient $\sin(1+1.9\pi x)$ changes from positive to negative. 
	\begin{figure}[ht]
		\begin{subfigure}{.5\textwidth}
			\centering
			\includegraphics[width=0.9\linewidth]{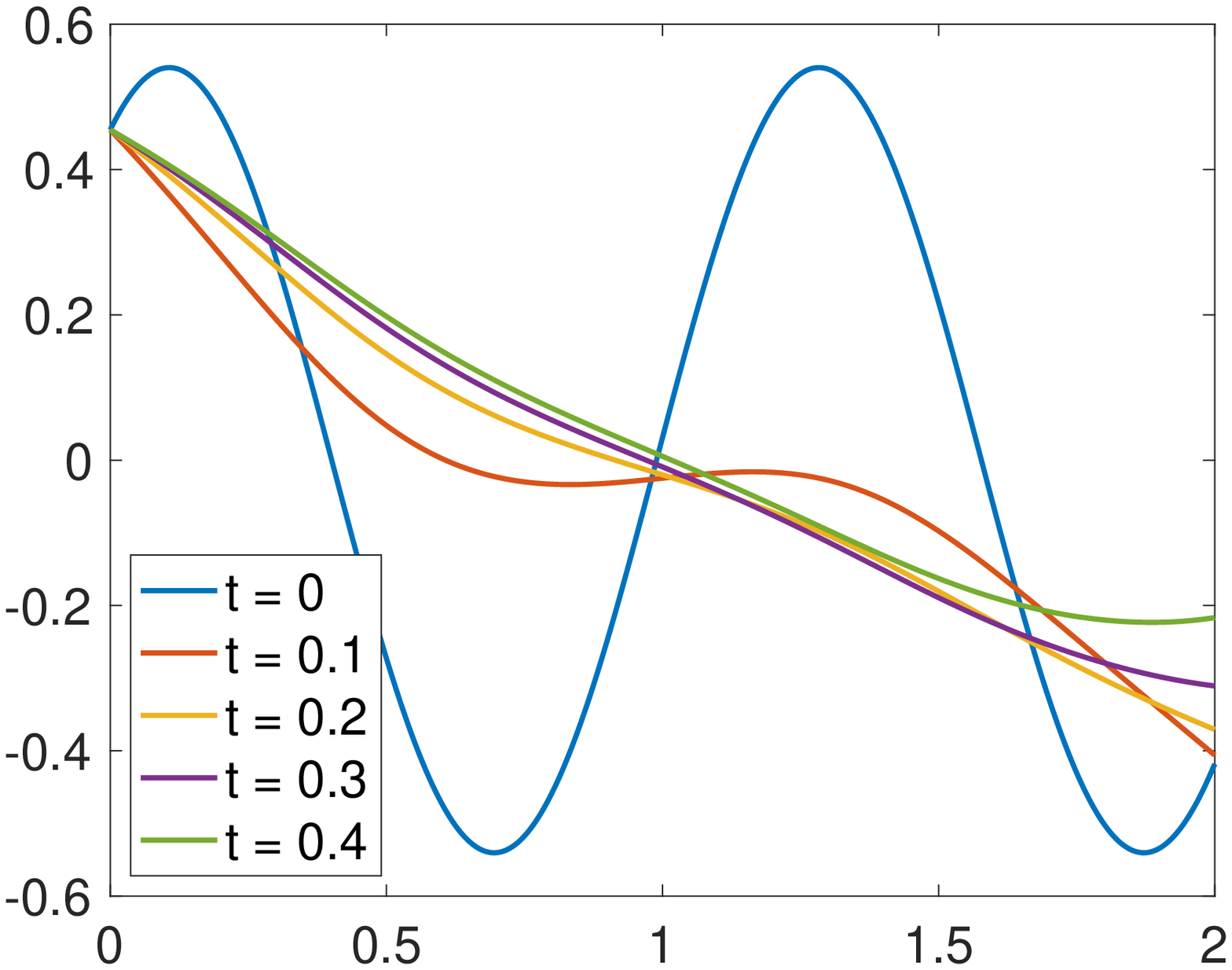}
			\caption{strong diffusion regime}
			\label{fig:diffsoln}
		\end{subfigure}
		\begin{subfigure}{.5\textwidth}
			\centering
			\includegraphics[width=0.9\linewidth]{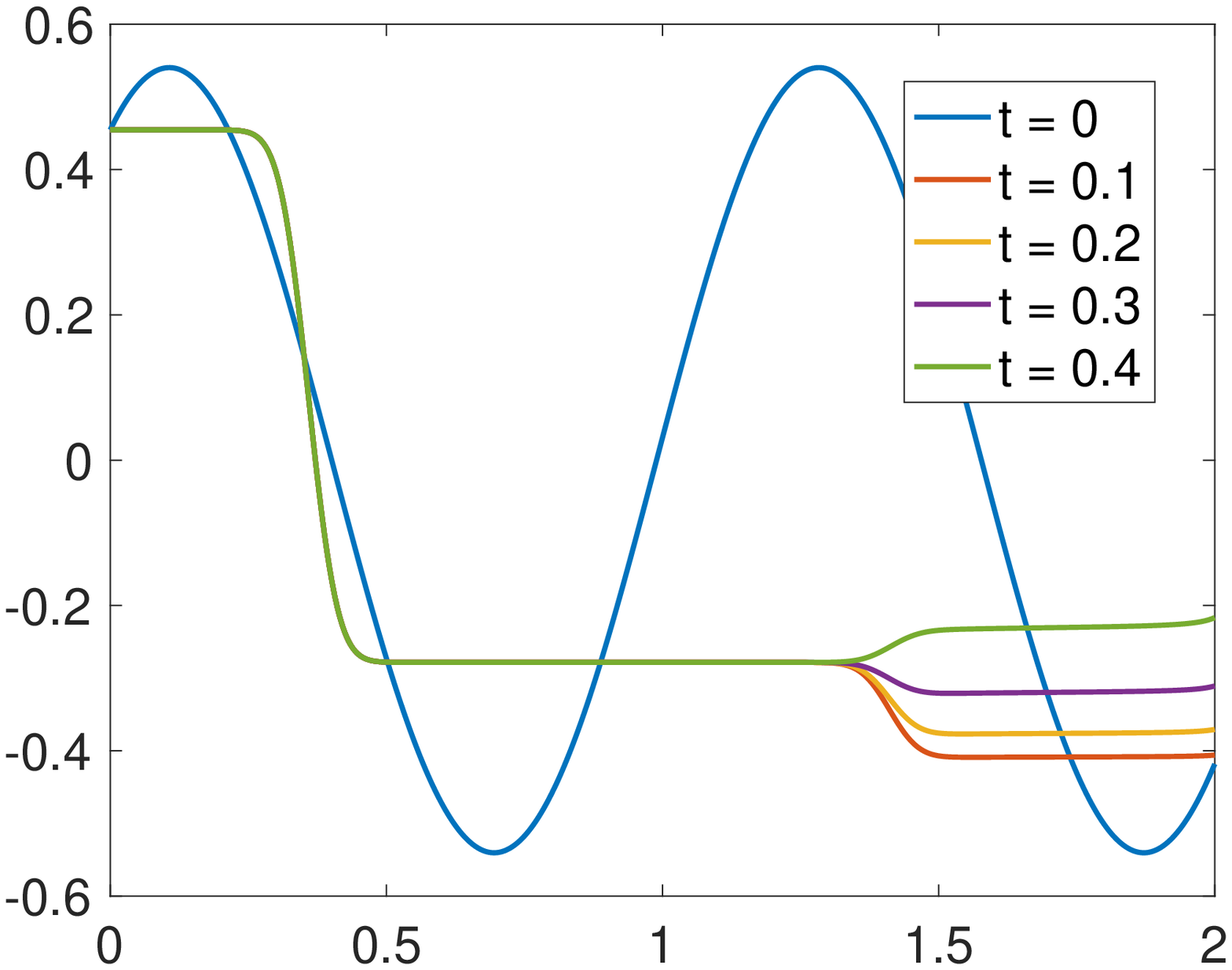}
			\caption{strong convection regime}
			\label{fig:convsoln}
		\end{subfigure}
		\caption{time stamps of convection diffusion equation}
	\end{figure}
	We first set $k=1$ and test the diffusion dominated case. We plot the case of inhomogeneous and homogeneous BC in Figure \ref{fig:cvdfconv} and \ref{fig:cvdfconvhomo}. The plots are again similar to that of the heat equation upto some minor difference. Then we set $k=100$ and plot in Figure \ref{fig:cvdfconv100} and \ref{fig:cvdfconvhomo100}. In this case, the order of convergence drops to $3$rd order regardless of different formulations or type of boundary conditions.
	
	\begin{figure}[ht]
		\begin{subfigure}{.5\textwidth}
			\centering
			\includegraphics[width=0.9\linewidth]{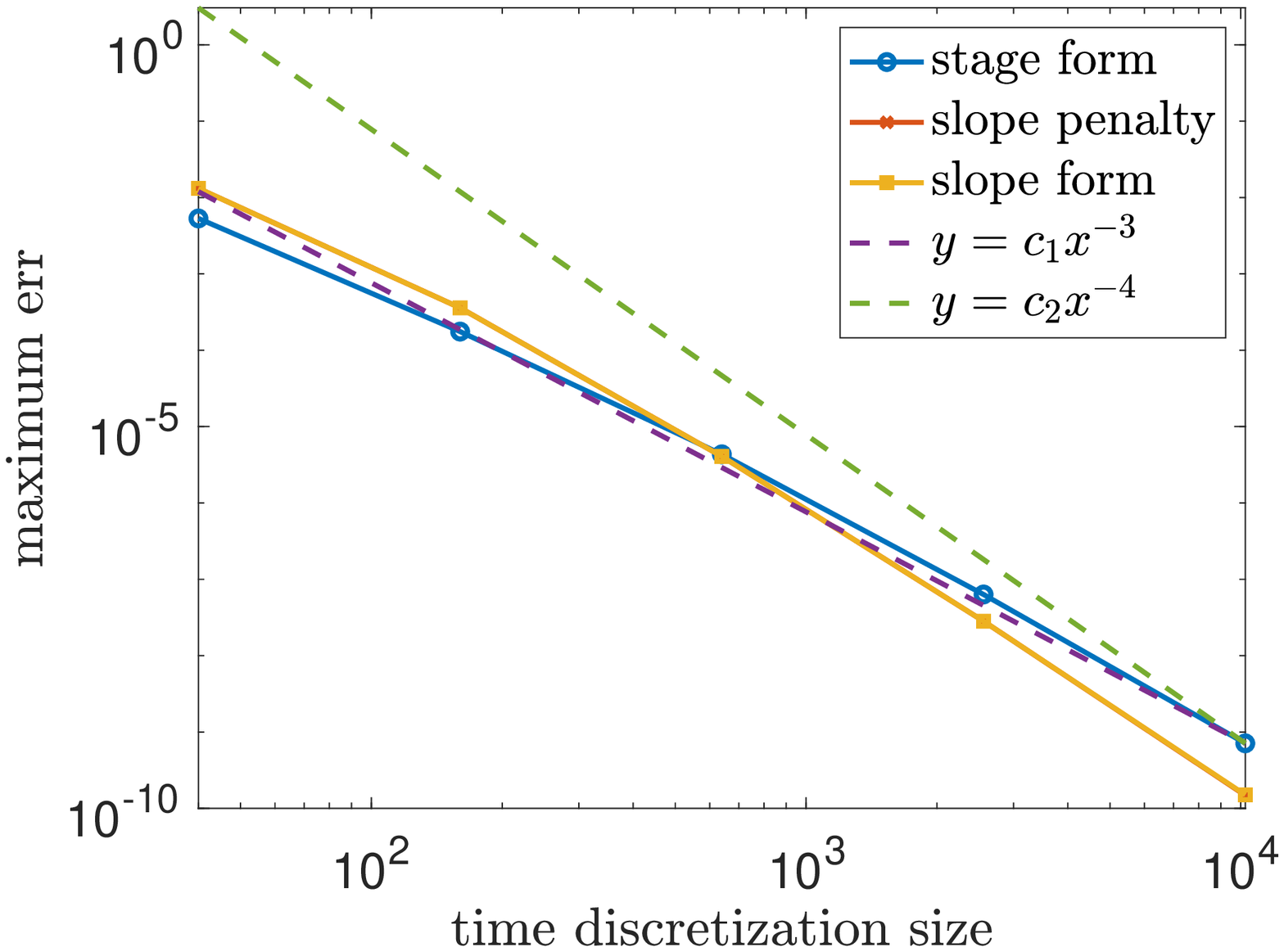}
			\caption{inhomogeneous BC}
			\label{fig:cvdfconv}
		\end{subfigure}
		\begin{subfigure}{.5\textwidth}
			\centering
			\includegraphics[width=0.9\linewidth]{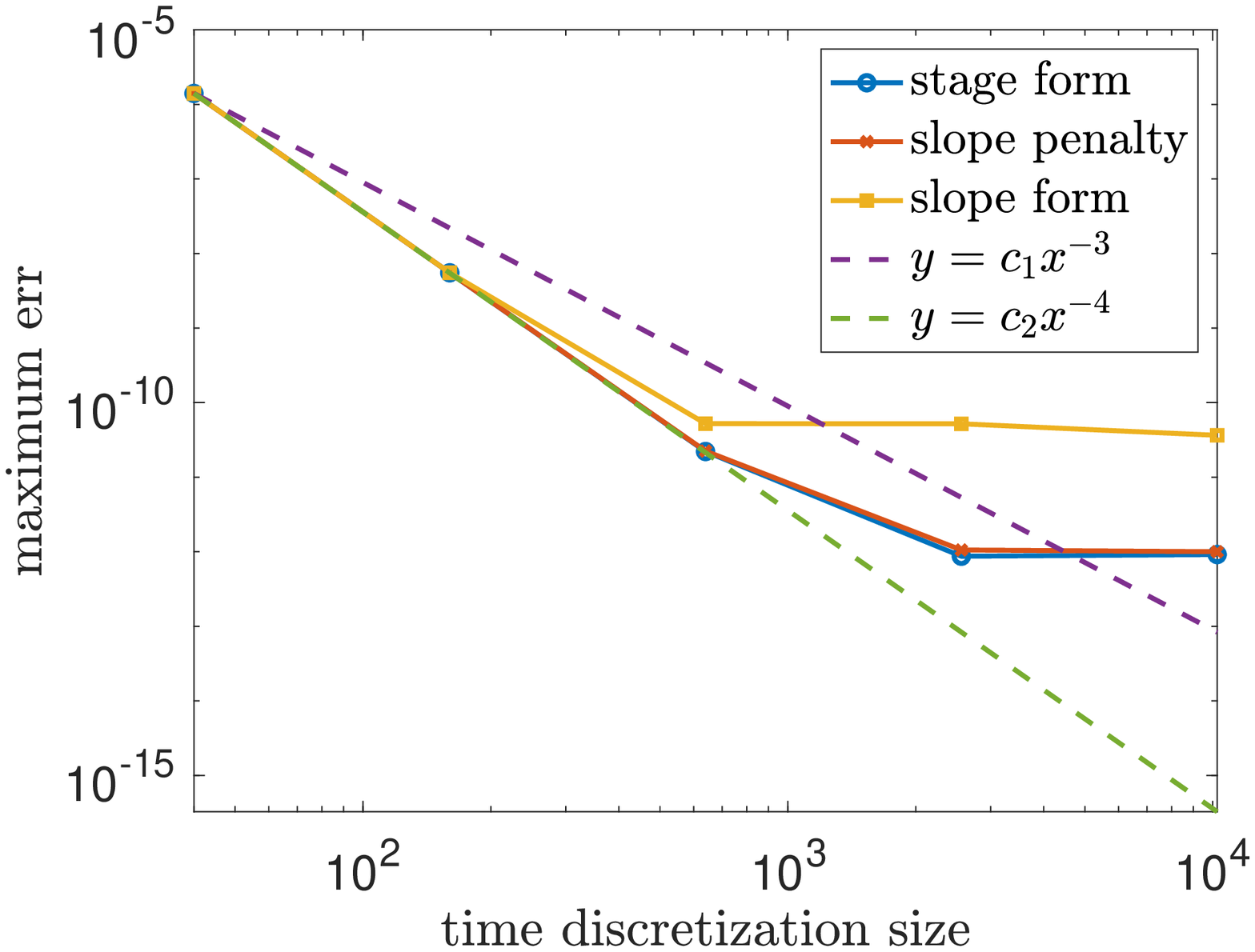}
			\caption{homogeneous BC}
			\label{fig:cvdfconvhomo}
		\end{subfigure}
		\caption{convergence test of convection diffusion equation with $k=1$}
	\end{figure}
	
	\begin{figure}[ht]
		\begin{subfigure}{.5\textwidth}
			\centering
			\includegraphics[width=0.9\linewidth]{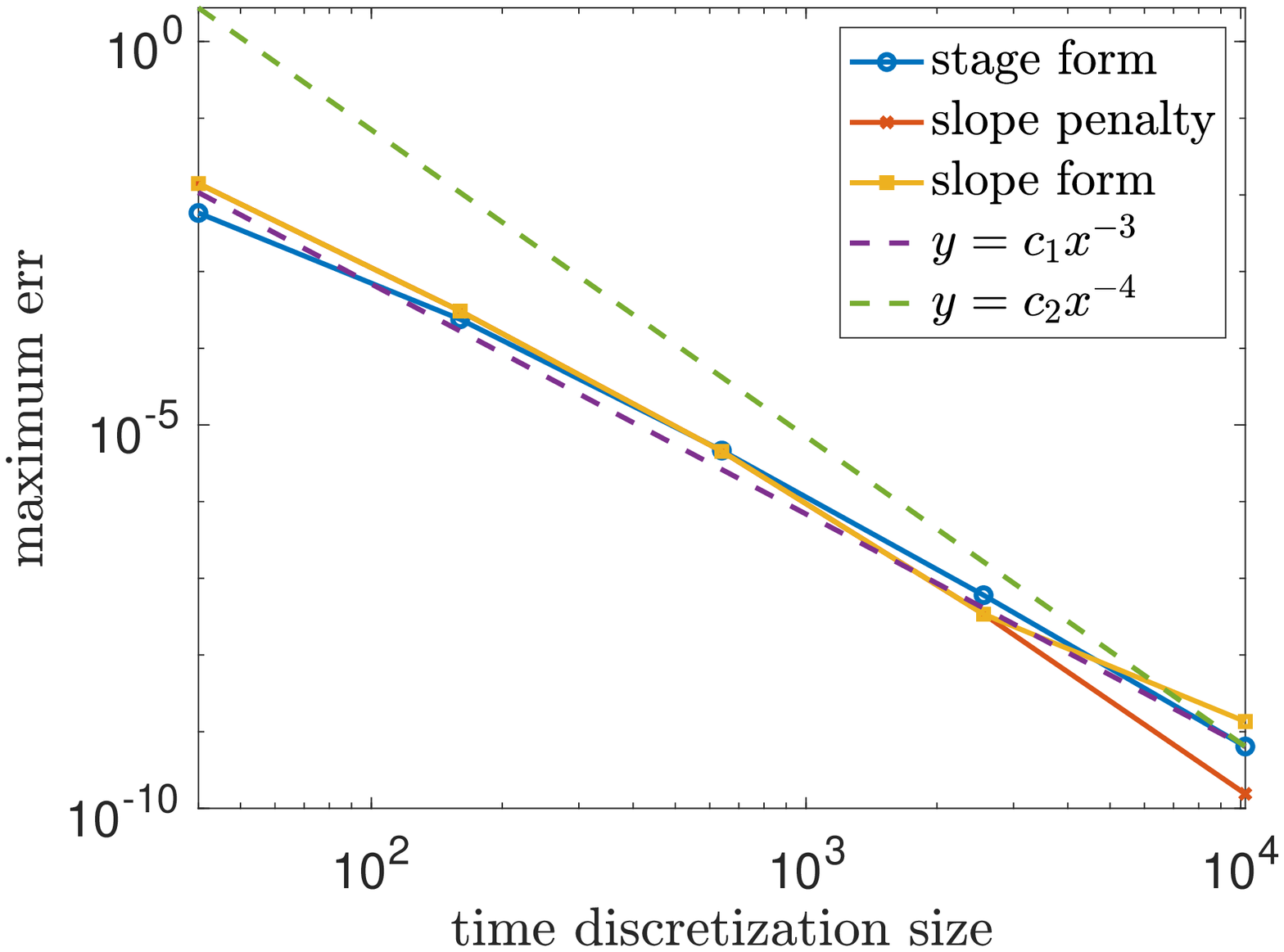}
			\caption{inhomogeneous BC}
			\label{fig:cvdfconv100}
		\end{subfigure}
		\begin{subfigure}{.5\textwidth}
			\centering
			\includegraphics[width=0.9\linewidth]{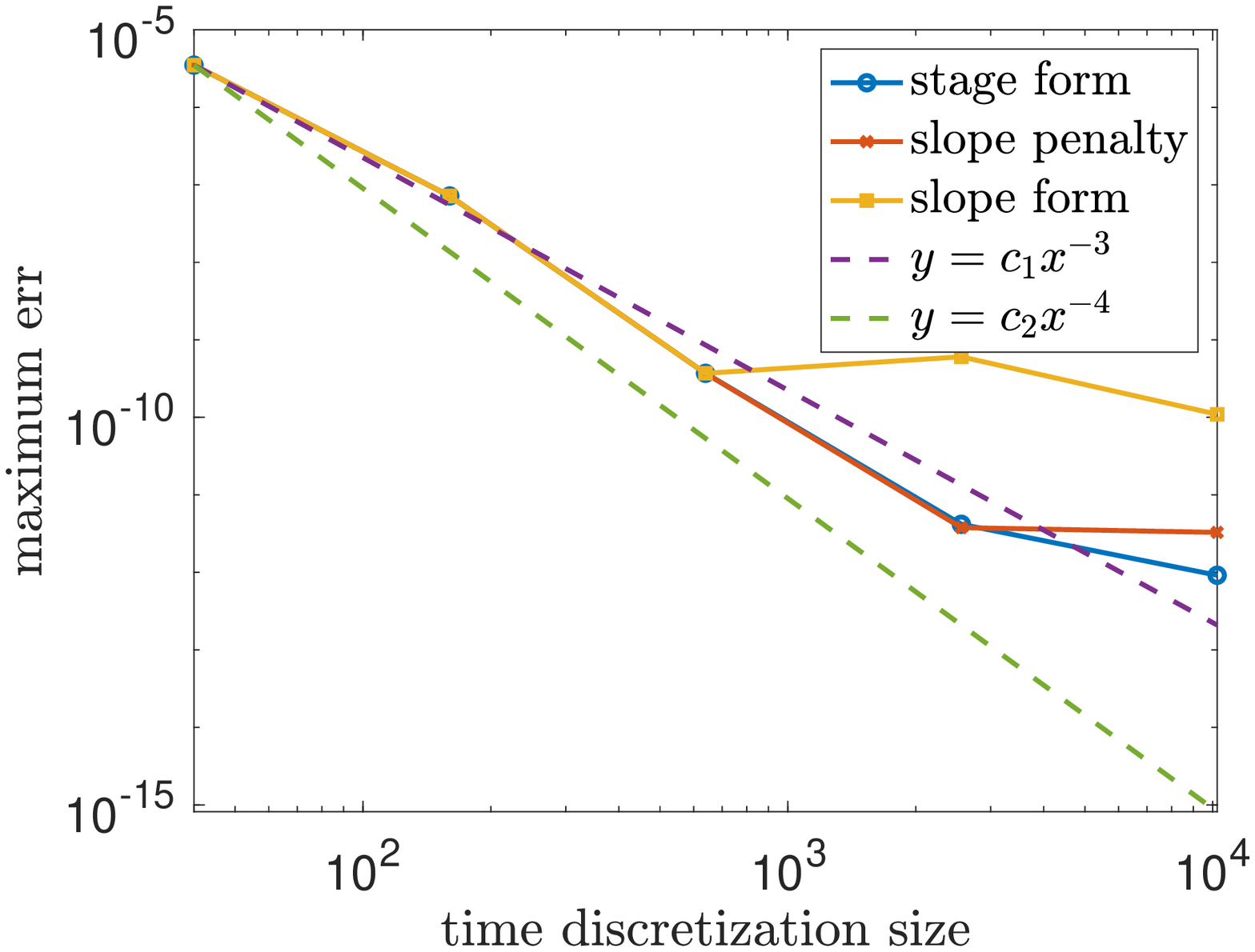}
			\caption{homogeneous BC}
			\label{fig:cvdfconvhomo100}
		\end{subfigure}
		\caption{convergence test of convection diffusion equation with $k=100$}
	\end{figure}

	\subsection{1D variable coefficient parabolic equation}
	We test variable coefficient parabolic equation in this section with same discretization and ESDIRK method as the previous subsection.
	\begin{equation}\label{eqn:varelliptic}
		u_t = \partial_x(a\partial_x u) + \kappa^2 u + q
	\end{equation}
	where $a(x) = 1 + 0.9\sin(1 + 1.9\pi x)$ is the inhomogeneous medium conductivity. We set $\kappa=1$ so the equation is diffusion dominated. 	We first calculate a typical solution with no external source $q=0$ and with initial and boundary conditions set as in \eqref{eqn:ICBC}.
	The time stamps of the approximate solution is plotted in Figure \ref{fig:4soln}. One can see that the solution quickly changes from the sine profile to a monotone temperature diffusion profile. Also notice that the solution is nearly a constant on regions where the medium conductivity achieves large values.
	
	\begin{figure}[ht]
		\centering
		\includegraphics[width=0.6\linewidth]{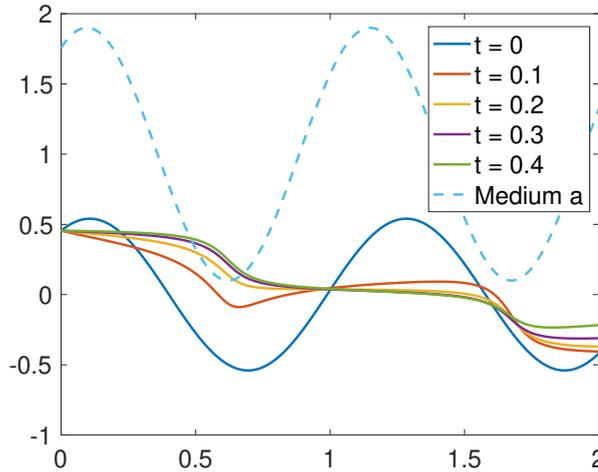}
		\caption{time stamps of approximate solution from $t=0$ to $t=0.5$.}
		\label{fig:4soln}
	\end{figure}
	In Figure \ref{fig:varconv} and \ref{fig:varconvhomo} we plot the convergence rate for inhomogeneous BC and homogeneous BC cases respectively. The plots more or less resembles that of the heat equation. For inhomogeneous case, we obtain asymptotically $3$rd order convergence for stage formulation and nearly $4$th order for slope formulations. For homogeneous case, all three methods obtains $4$th order convergence before it gets saturated at magnitude of $10^{-10}$ to $10^{-12}$.
	
	\begin{figure}[ht]
		\begin{subfigure}{.5\textwidth}
			\centering
			\includegraphics[width=0.9\linewidth]{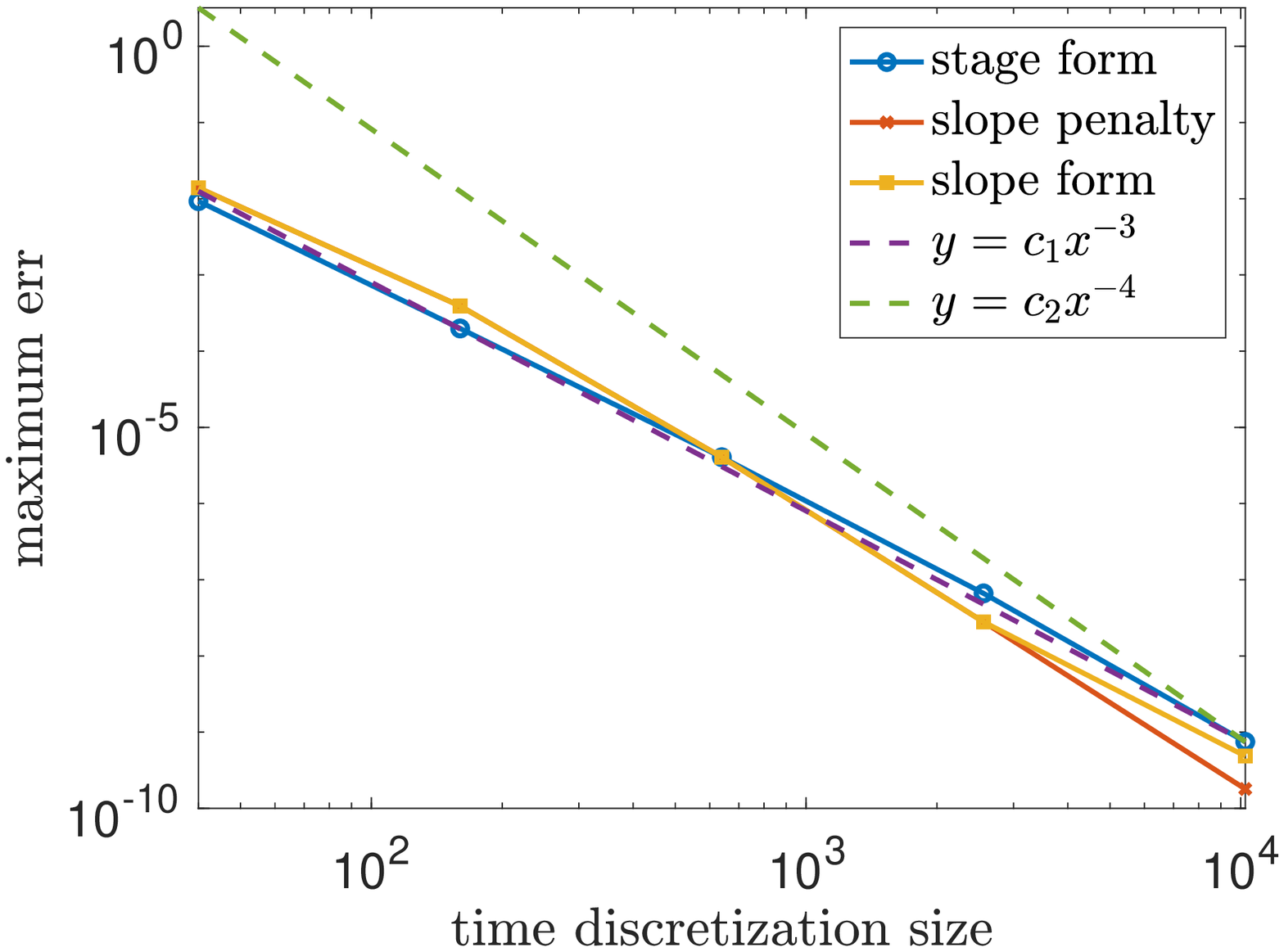}
			\caption{inhomogeneous BC}
			\label{fig:varconv}
		\end{subfigure}
		\begin{subfigure}{.5\textwidth}
			\centering
			\includegraphics[width=0.9\linewidth]{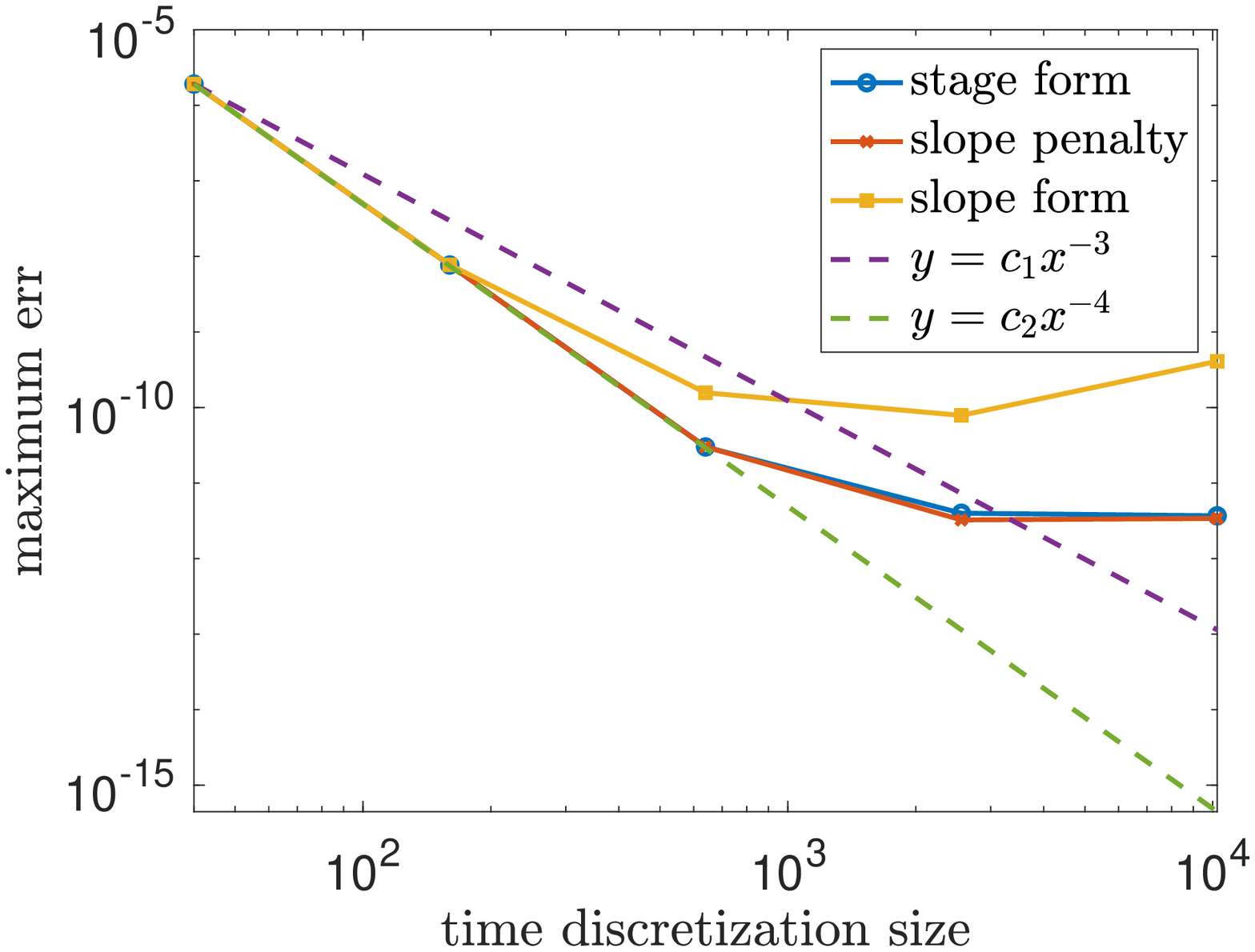}
			\caption{homogeneous BC}
			\label{fig:varconvhomo}
		\end{subfigure}
		\caption{convergence test of variable coefficient parabolic equation}
	\end{figure}

	\subsection{2D heat equation}
	We consider the 2D heat equation in this section
	\begin{equation}\label{eqn:heat2}
		u_t = u_{xx} + u_{yy} + q \,,
	\end{equation}
	where a suitable source $q$, initial and boundary conditions that are compatible with the exact solution
	\[
	u(t,x,y) = \sin(\pi x)\exp(-t(y-\frac{1}{2})^2) \,,
	\]
 in the inhomogeneous boundary condition case, or
 \[
    u(t,x,y) = \sin(2 \pi x) \sin( 2 \pi y) \exp(-t(x+y)) \,,
 \]
 in the homogeneous boudnary condition case.
 
	The domain $\Omega = [0,1]^2$ is divided into $8\times 8$ nodes with $p=21$. The approximate solution is computed using the ARK4(3)6L[2]SA-ESDIRK method in \cite{kennedy2003additive}.
	For both the stage and slope formulation, we plot the maximum error with different time discretizations in Figure \ref{fig:2Dconv1} and \ref{fig:2Dconv2}. In the case with inhomogeneous BC, the (penalized) slope formulation has minor order reduction while the stage formulation lost about one order of accuracy. In comparison, in the homogeneous BC cases, both formulations have same order of accuracy. 
	
	\begin{figure}[ht]
		\begin{subfigure}{.5\textwidth}
			\centering
			\includegraphics[width=0.9\linewidth]{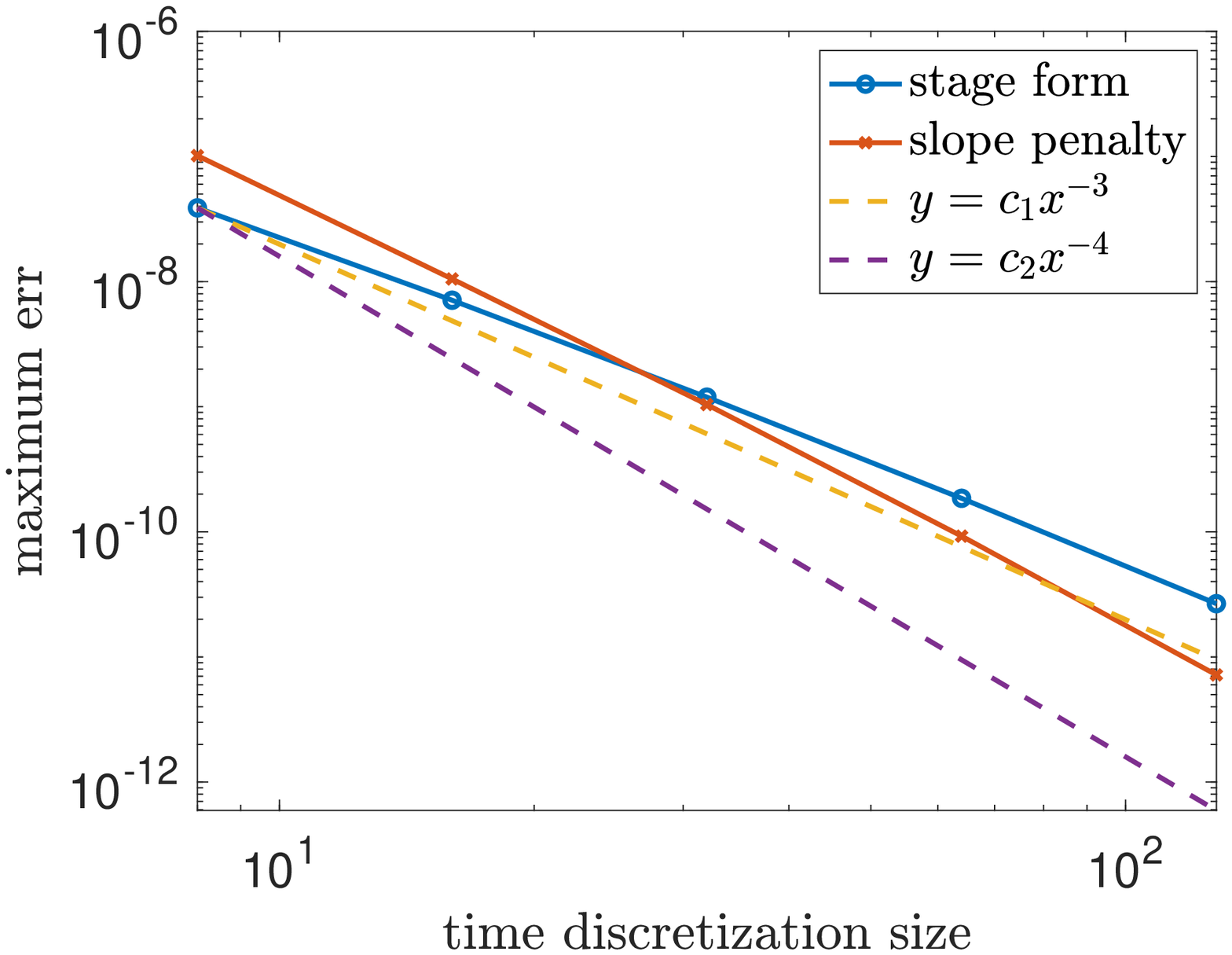}
			\caption{inhomogeneous BC}
			\label{fig:2Dconv1}
		\end{subfigure}
		\begin{subfigure}{.5\textwidth}
			\centering
			\includegraphics[width=0.9\linewidth]{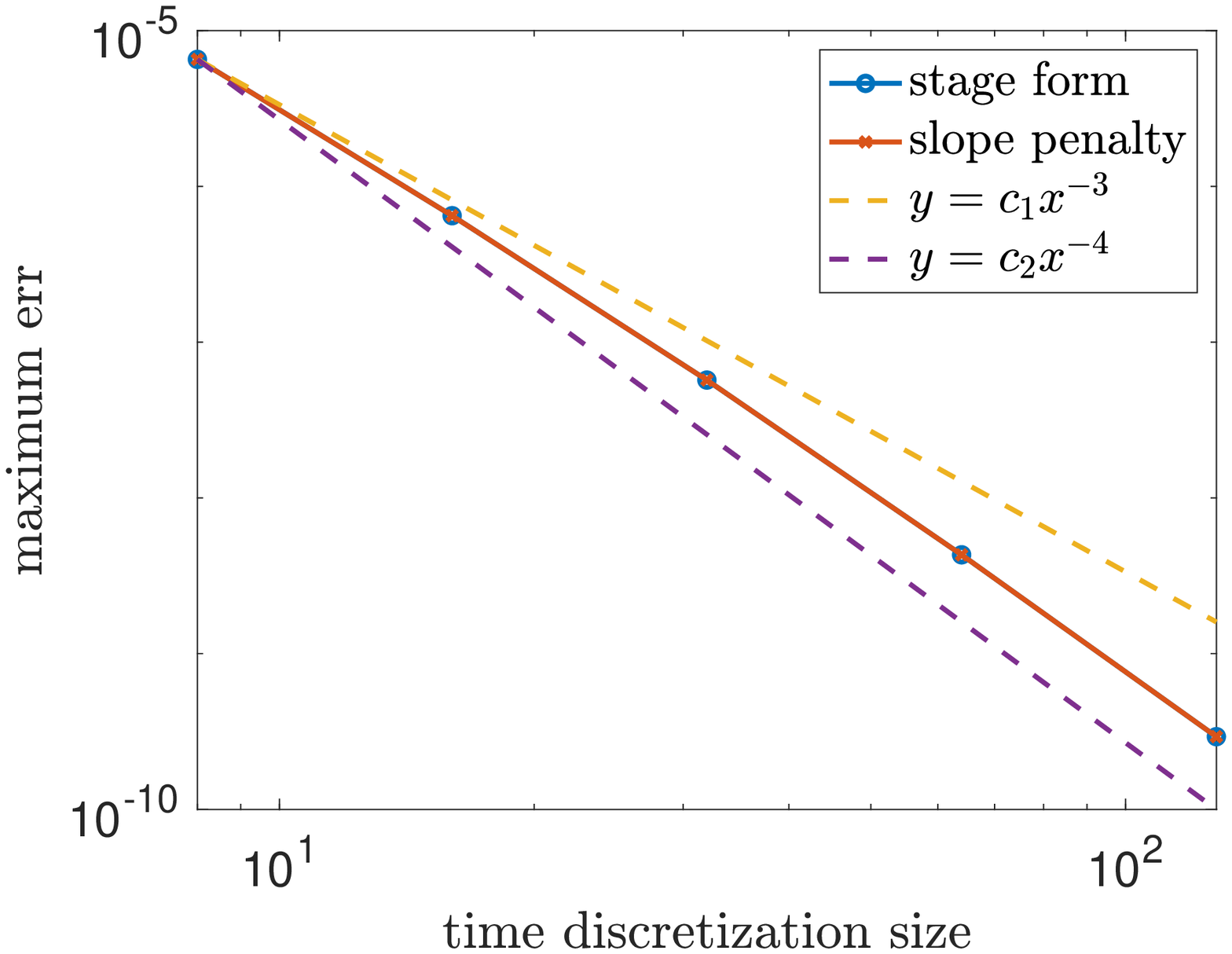}
			\caption{homogeneous BC}
			\label{fig:2Dconv2}
		\end{subfigure}
		\caption{convergence test of heat equation}
	\end{figure}

\subsubsection*{$O(N\log(N))$ time complexity}\label{sec:compareFEM}
We plotted the CPU time of each time step for solving the heat equation in Figure \ref{fig:FEMcompare}. In particular, the computational domain is discretized with $4\times 4$, $8 \times 8$,\ldots, $64 \times 64$ subdomains, each with $11\times 11$ Chebyshev nodes. The total number of degrees of freedom ranges from $1681$ to $410,881$. We then compare the proposed method with the finite element method (FEM). For the FEM method, the space is discretized with the same number of degree of freedom with no hierarchical structure. For simplicity, we used the backward-Euler method for the time discretization. The FEM solution is obtained using LAPACK DGB-FA and DGB-SL with a PLU decomposition to take advantage of the banded structure of the stiffness matrix. We note that there are more advanced ways to solve the linear system associated with high order FEM discretizations. A recent paper discussing such techniques is \cite{lor}.
In Figure \ref{fig:FEMcompare}, it is shown that the solve stage time of the proposed method scales as $O(N\log(N))$ while that of FEM is slightly better than $O(N^2)$. 

        \begin{figure}[ht]
		\centering
		\includegraphics[width=0.6\linewidth]{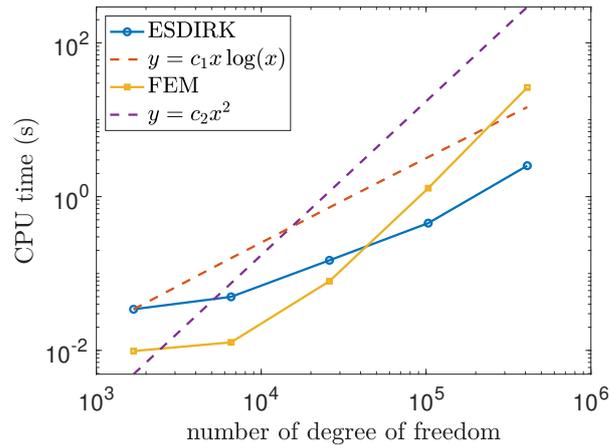}
		\caption{CPU time comparison between our method and FEM for solving one time step for the heat equation}
		\label{fig:FEMcompare}
	\end{figure}
 
	\subsection{2D Burgers equation}
	In this section, we consider the following 2D Burgers equations:
	\begin{equation}
		\begin{aligned}
			u_t &= \varepsilon (u_{xx}+u_{yy}) - (uu_x+vu_y) \,,\\
			v_t &= \varepsilon (u_{xx}+u_{yy}) - (uv_x+vv_y) \,.
		\end{aligned}
	\end{equation}
	We use the same discretization in this example and ARK4(3)6L[2]SA-ESDIRK for the Analytic solution test. A higher order method ARK5(4)8L[2]SA-ESDIRK (\cite{kennedy2003additive}) is used for the rotating flow test to compute the approximate soluitons.
	We investigate the convergence rate for different typical solution to the Burgers equation and show that RKHPS has high order convergence and stability, though loss of order convergence occurs for inhomogeneous boundary condition problem. A rotating flow example is also provided to demonstrate that RKHPS can capture a sharp transition region in the fluid.

	\subsubsection{Analytic solution test}
	The Burgers equations are solved from $t=0$ and $T=2$ on a unit square $[0,1]^2$. We partition the domain $[0,1]^2$ into $8\times 8=64$ smaller squares and each of them is further discretized with order $p=21$ Chebyshev nodes. In total there are $25921$ space grid points. The maximum error $\| u(T,\cdot)-u_\text{exact}(T,\cdot)\|_\infty$ are calculated on all grid points except those corner points.
	The viscosity is set as $\varepsilon= 0.1$. In this test, the slope formulation is not applicable and only stage formulation is used here. We study the convergence rate for different type of exact solutions at the terminal time $T=2$.
	\begin{description}
		\item[Traveling wave solution] The exact solution is of the following form:
		\begin{equation}
			\begin{aligned}
				u_\text{travel}(x,y,t) &= \frac{3}{4} - \frac{1}{4\left(1+\frac{\exp(4y-4x-t)}{32\varepsilon}\right)} \,,\\
				v_\text{travel}(x,y,t) &= \frac{3}{4} + \frac{1}{4\left(1+\frac{\exp(4y-4x-t)}{32\varepsilon}\right)} \,.\\
			\end{aligned}
		\end{equation}
		In Figure \ref{fig:2DBurgers1}, we plot the maximum error for both $u$ and $v$, the convergence rate is near $3$rd order due to the order reduction in inhomogeneous BC solutions.
		\item[Highly diffusive solution] In this case, we consider the following exact solution:
		\begin{equation}
			\begin{aligned}
				u_\text{diffusive}(x,y,t)&=-\frac{4\pi\varepsilon \exp(-5\pi^2\varepsilon t)\cos(2\pi x)\sin(\pi y)}{2+\exp(-5\pi^2 \varepsilon t)\sin(2\pi x)\sin(\pi y) } \,,\\
				v_\text{diffusive}(x,y,t)&=-\frac{2\pi\varepsilon \exp(-5\pi^2\varepsilon t)\sin(2\pi x)\cos(\pi y)}{2+\exp(-5\pi^2 \varepsilon t)\sin(2\pi x)\sin(\pi y) } \,.
			\end{aligned}
		\end{equation}
		In Figure \ref{fig:2DBurgers2}, we plot the maximum error for $u$ and $v$. The convergence rate of both are close to $4$th order and gradually drop to $3$rd order as the time discretization get finer.
	\end{description}
	
	\begin{figure}[ht]
		\begin{subfigure}{.5\textwidth}
			\centering
			\includegraphics[width=0.9\linewidth]{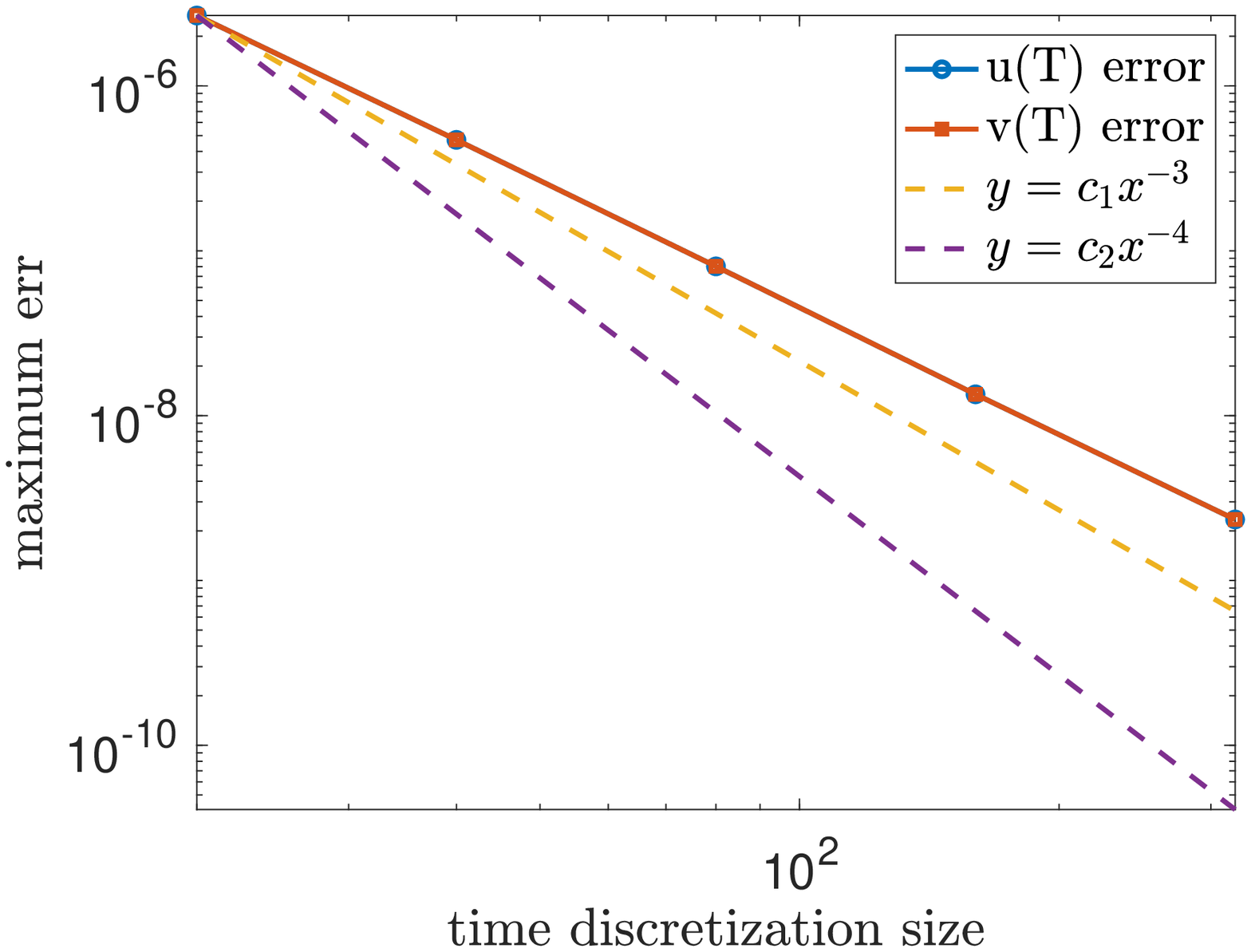}
			\caption{traveling wave solution}
			\label{fig:2DBurgers1}
		\end{subfigure}
		\begin{subfigure}{.5\textwidth}
			\centering
			\includegraphics[width=0.9\linewidth]{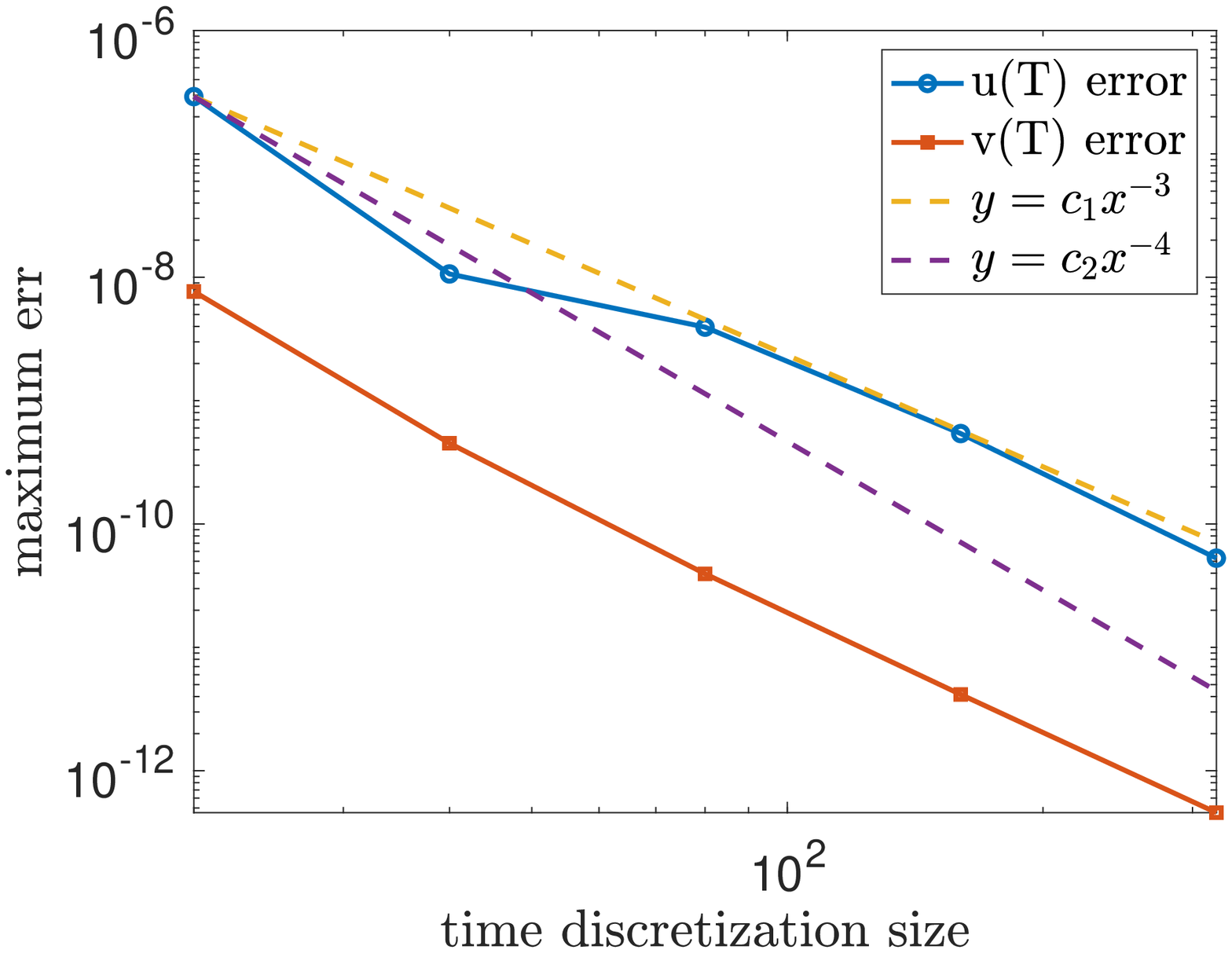}
			\caption{highly diffusive solution}
			\label{fig:2DBurgers2}
		\end{subfigure}
		\caption{convergence test of Burgers equation}
	\end{figure}	
	
	\subsubsection{Rotating flow test}
	In this test, we consider a rotating flow problem on the domain $[-\pi,\pi]^2$ with terminal time $T = 1.5$. The viscosity is set to be low $\varepsilon = 0.005$ and we set initial condition as
	\[
	\begin{aligned}
		u &=-5y\exp(-3(x^2 + y^2))\,,\\
		v &=5x\exp(-3(x^2+y^2)) \,.
	\end{aligned}
	\]
	and set no-slip boundary conditions. The domain is discretized with $24\times 24$ leaf nodes with $p=24$ and the time is discretized with $\tau = 0.01$. To capture the shock like transition region, we used a $5$th order ESDIRK method.
	In Figure \ref{fig:rotate}, we plot the contour of velocity $[u,v]^\top$ at time $t = 0.01,0.51$ and $1.01$. We can see that two semicircle fluid are rotating and gradually expanding to a larger circle, eventually they form a shock near the circle. 
	
	\begin{figure}[ht]
		\begin{subfigure}{.32\textwidth}
			\centering
			\includegraphics[width=0.9\linewidth]{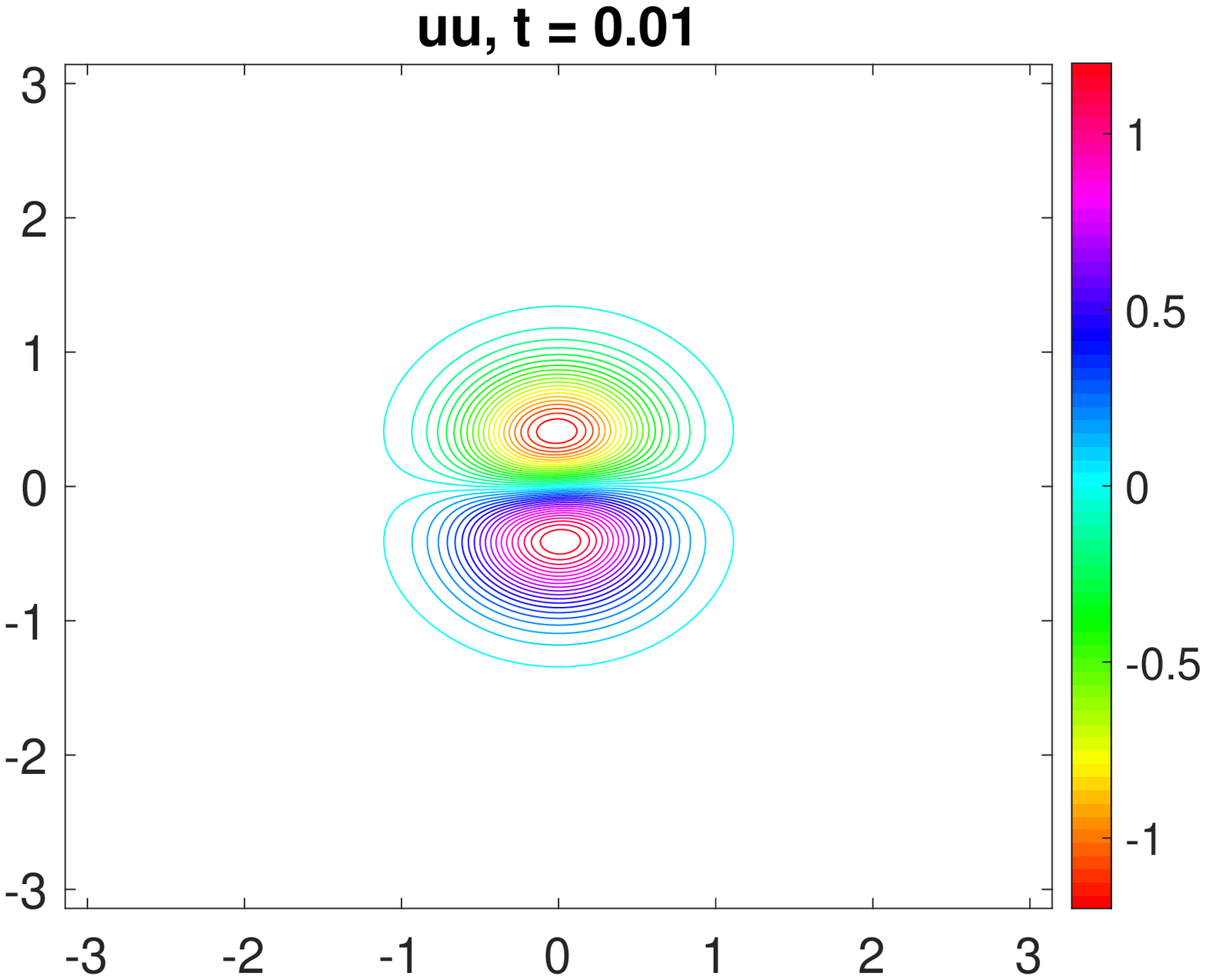}
		\end{subfigure}
		\begin{subfigure}{.32\textwidth}
			\centering
			\includegraphics[width=0.9\linewidth]{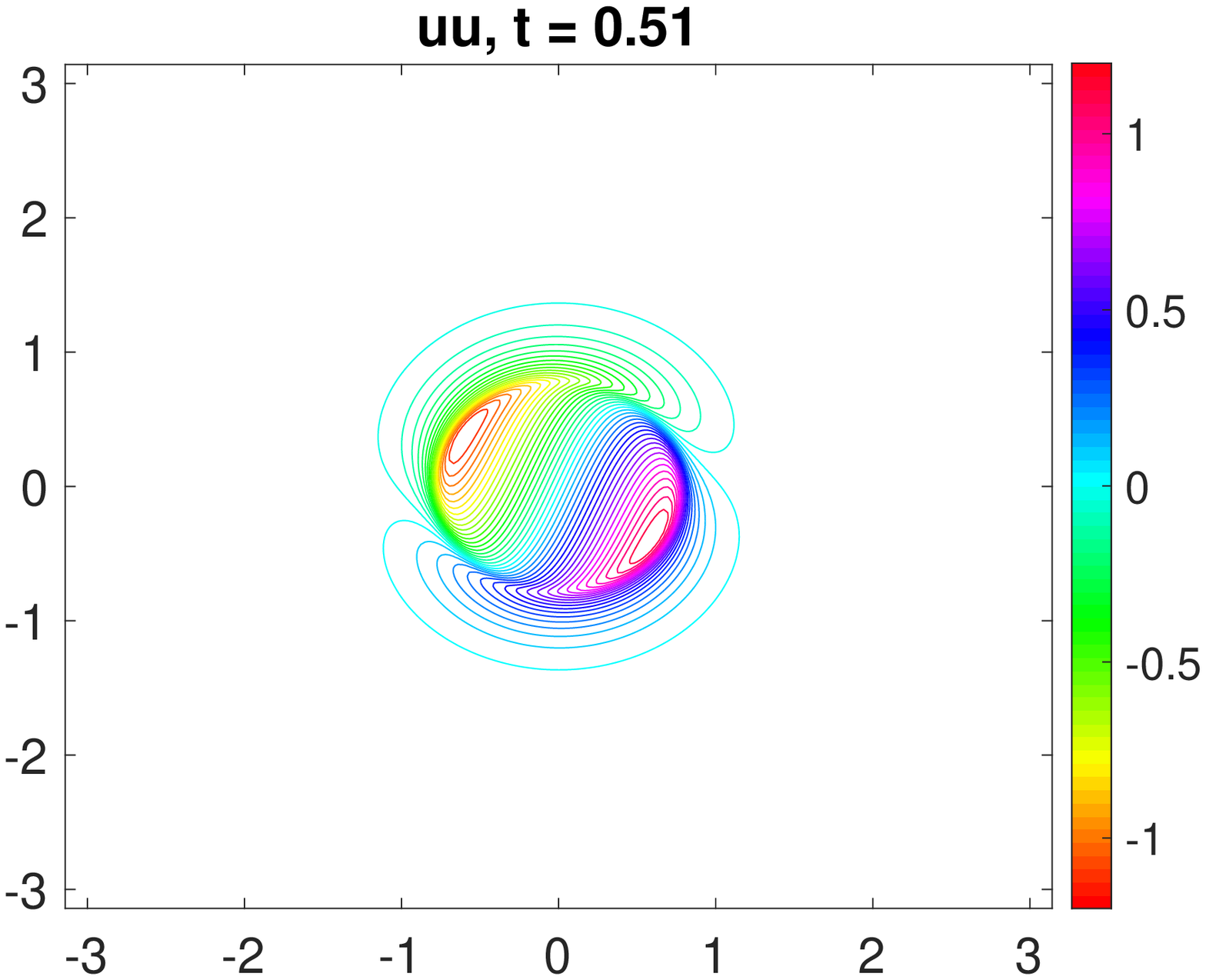}
		\end{subfigure}
		\begin{subfigure}{.32\textwidth}
			\centering
			\includegraphics[width=0.9\linewidth]{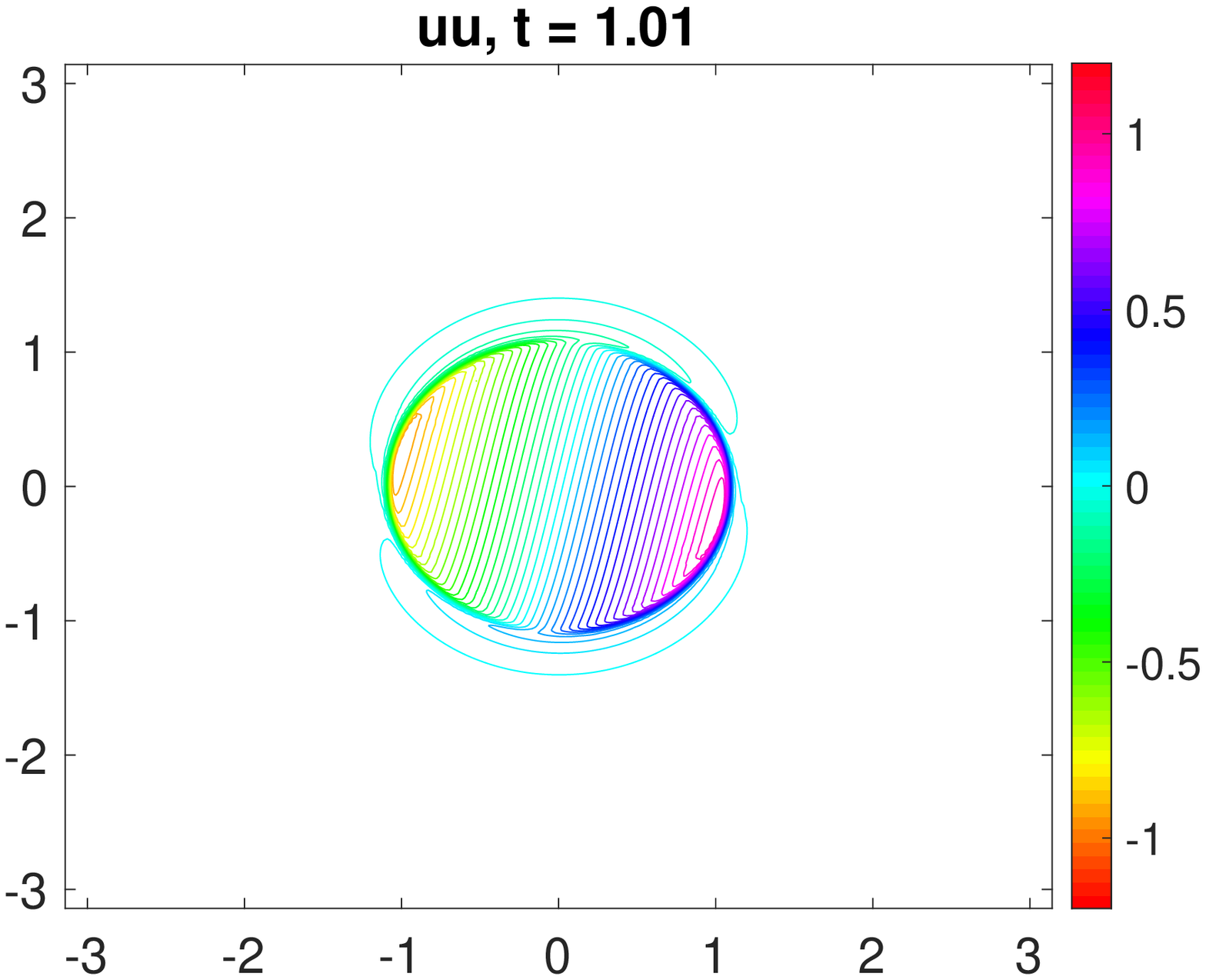}
		\end{subfigure}
		\\
		\begin{subfigure}{.32\textwidth}
			\centering
			\includegraphics[width=0.9\linewidth]{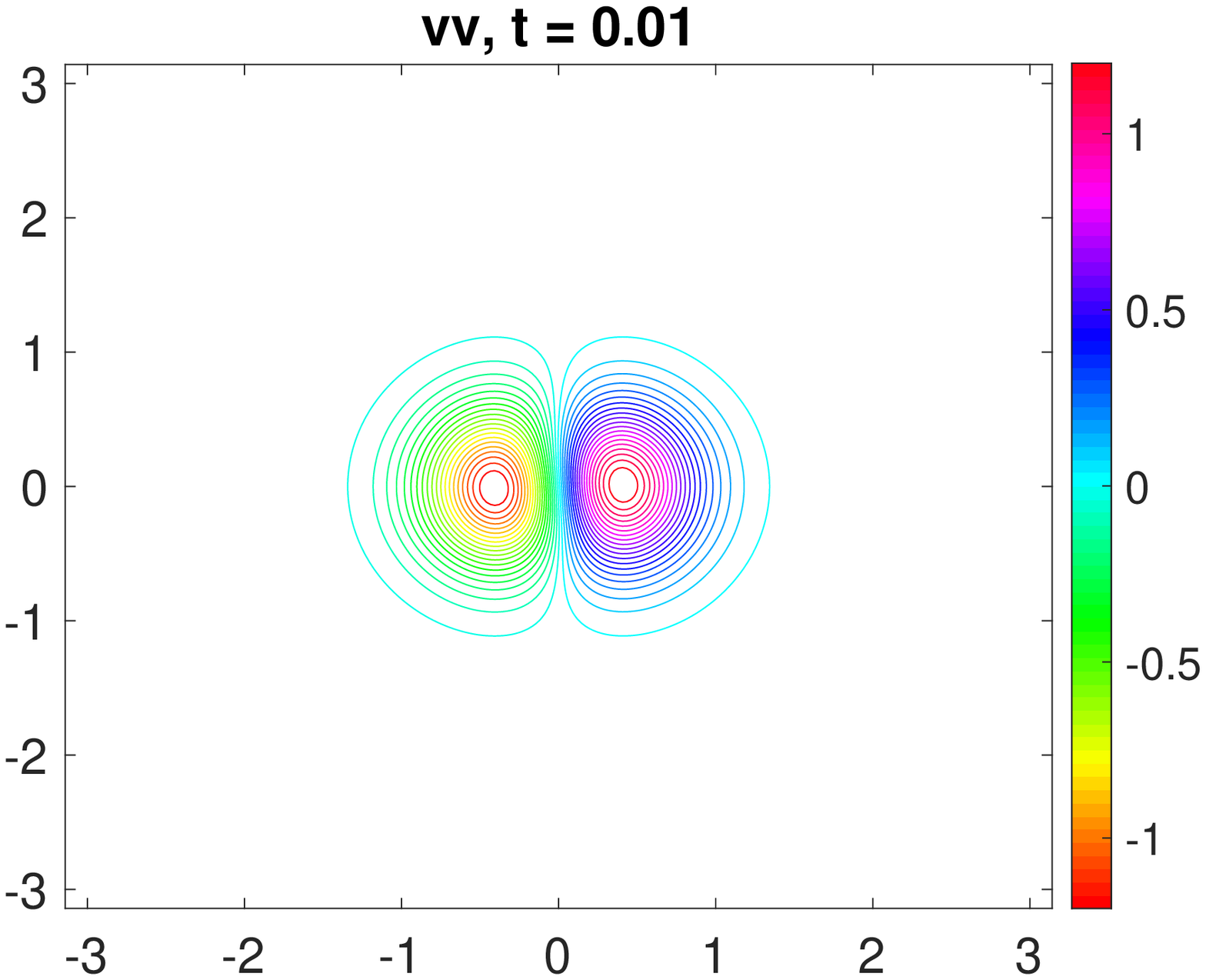}
		\end{subfigure}
		\begin{subfigure}{.32\textwidth}
			\centering
			\includegraphics[width=0.9\linewidth]{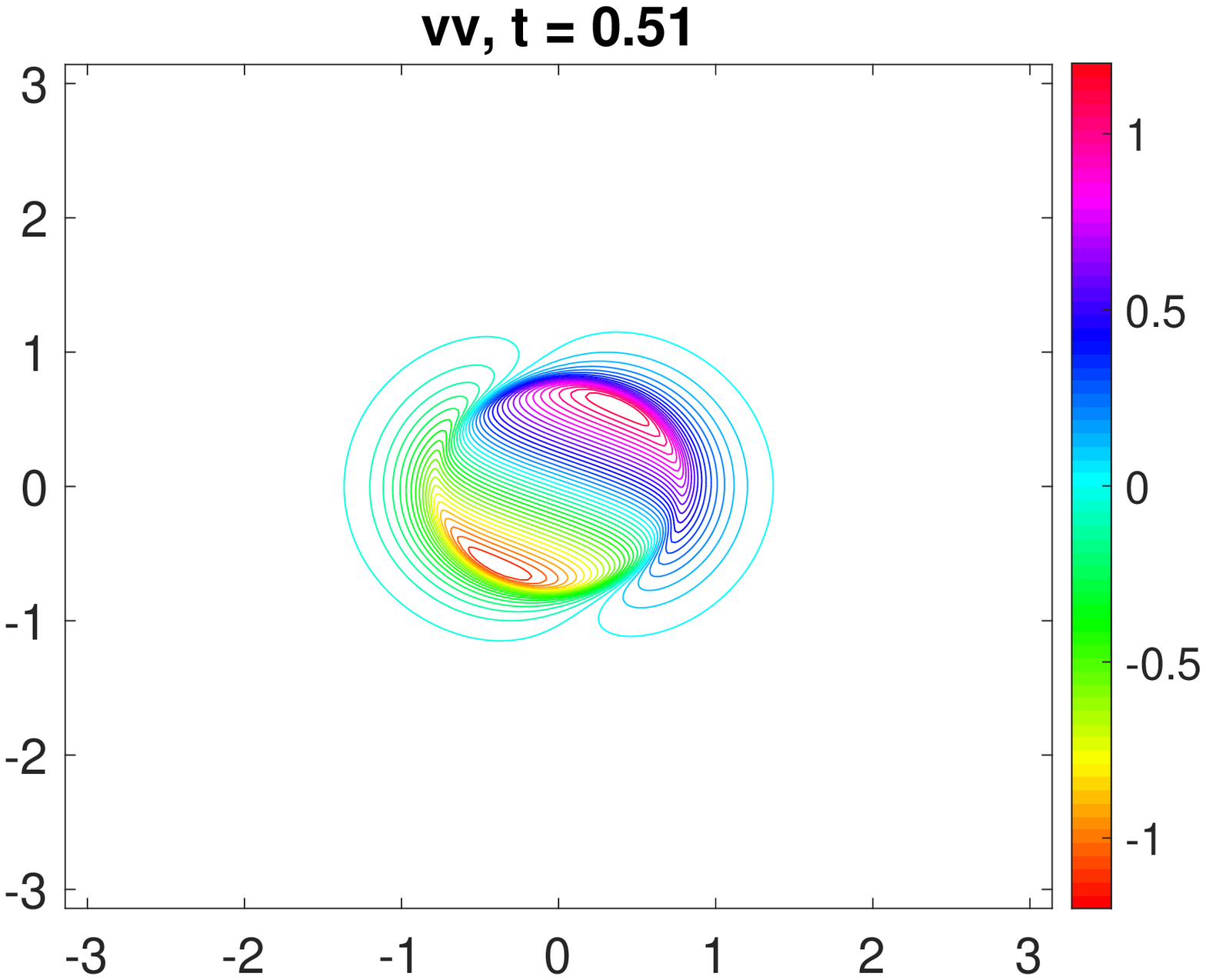}
		\end{subfigure}
		\begin{subfigure}{.32\textwidth}
			\centering
			\includegraphics[width=0.9\linewidth]{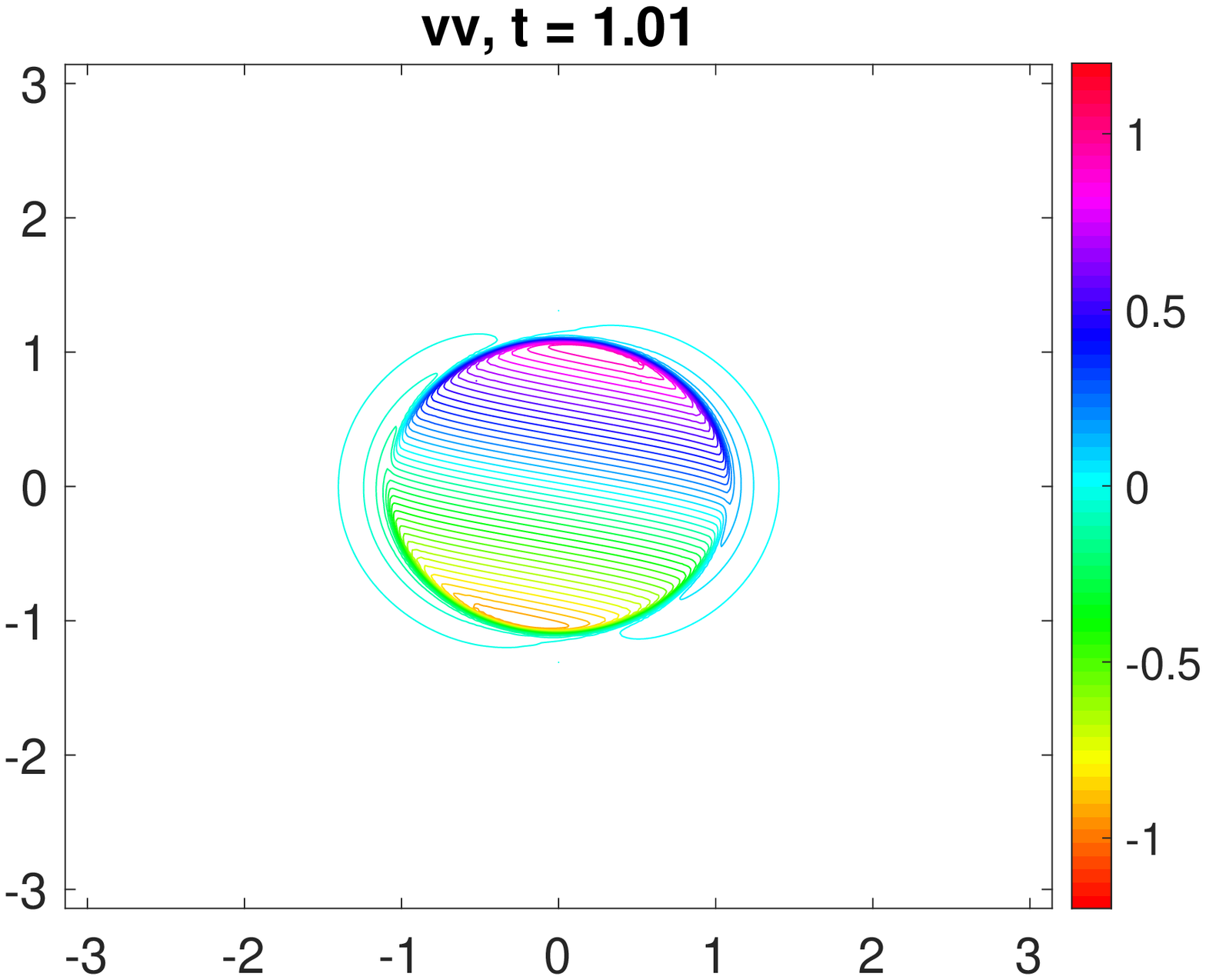}
		\end{subfigure}
		\caption{rotating flow with low viscosity}
		\label{fig:rotate}
	\end{figure}
	
	\subsection{Multiple filamentation}
	
	As a final example we simulate the regularized non-linear Schr\"{o}dinger equation \cite{Lushnikov:10}
	\begin{equation} \label{eq:rnls}
		i u_t + (1-ia \varepsilon) \Delta u + (1+i c \varepsilon)|u|^2u = i b \varepsilon u.
	\end{equation} 
	Here the terms with $a,b$ and $c$ corresponds to, linear wave-number dependent absorption,  linear gain, and two-photon absorption, respectively. This is a challenging equation to simulate as its solution approximates collapse of filaments, a process which requires accurate numerics in both space and time. When $\varepsilon = 0$ and the optical power is beyond a critical threshold localized filementation occurs. Each such collapse is well approximated by a self-similar radially symmetric solution 
	\[
	|u(r,t)| \approx R_0(\rho) / L(t), 
	\]
	with  $\rho = r / L(t)$ and
	\[
	L(t) \approx \sqrt(2\pi) \frac{\sqrt{t_0-t}}{\sqrt{\ln | \ln (t_0 - t) | }},
	\]
	where $t_0$ is the time of collapse. 
	
	
	
	We solve this problem on the domain $\Omega = [0,25.6]^2$ from $t=0$ to $t= 5s$. The domain is divided into $8\times 8$ leaf nodes with $p=13$, and timestep $\Delta t= 0.005$. The problem is equipped homogeneous Dirichlet boundary condition and with initial data
	\[
	u = 5\exp\left(-(x-12.8)^2 - (y-12.8)^2\right) \,.
	\]
	We plot the contour of the modulus of the solution at different time stamps $t=0,1.66,3.325$ and $t=4.995$ in Figure \ref{fig:filamentation}. We see that the solution has sharp derivatives and our method qualitatively captures the collapse behavior of filaments of the solution.
	\begin{figure}[htp]
		\begin{subfigure}{.45\textwidth}
			\centering
			\includegraphics[width=0.9\textwidth]{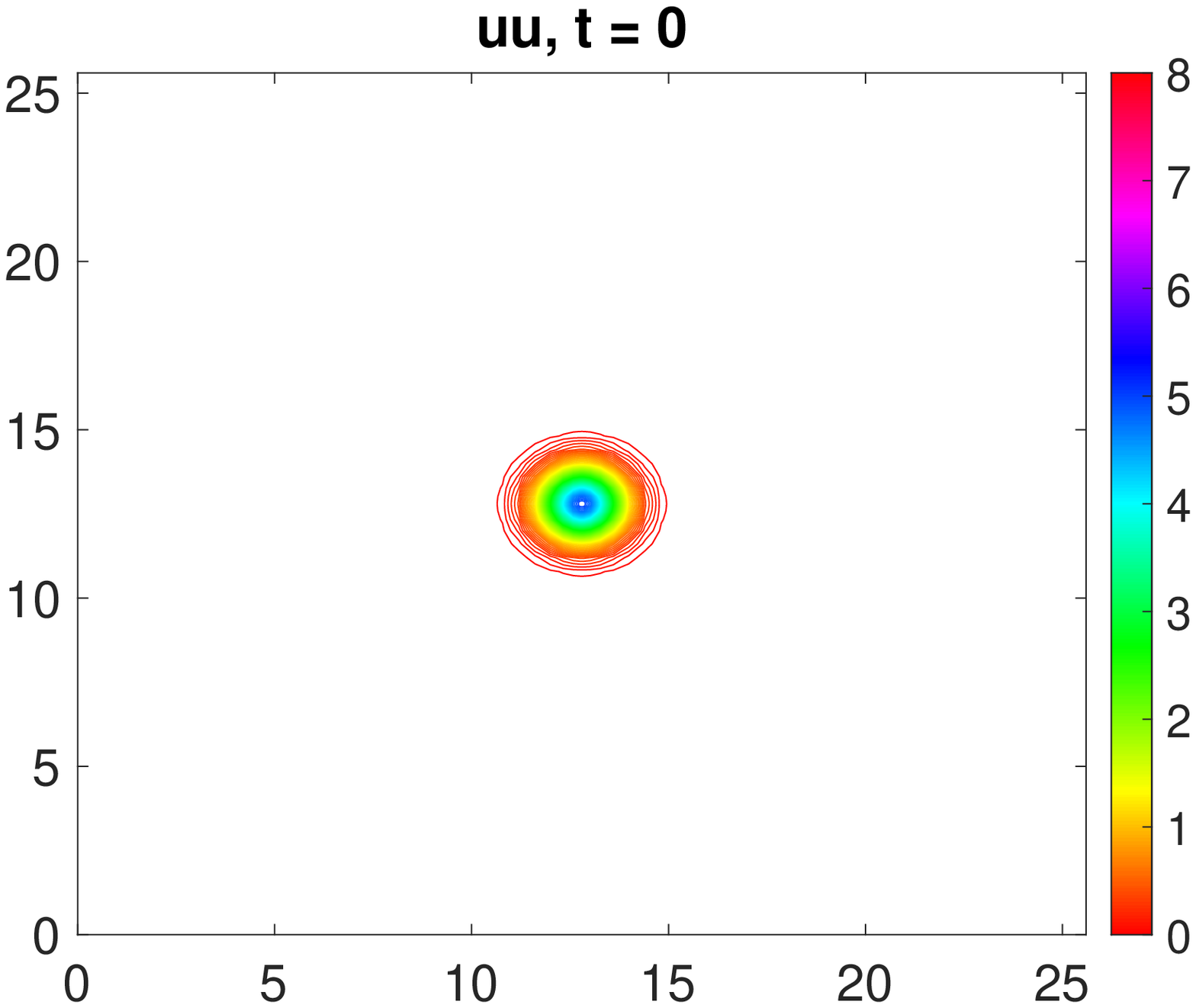}
		\end{subfigure}
		\begin{subfigure}{.45\textwidth}
			\centering
			\includegraphics[width=0.9\textwidth]{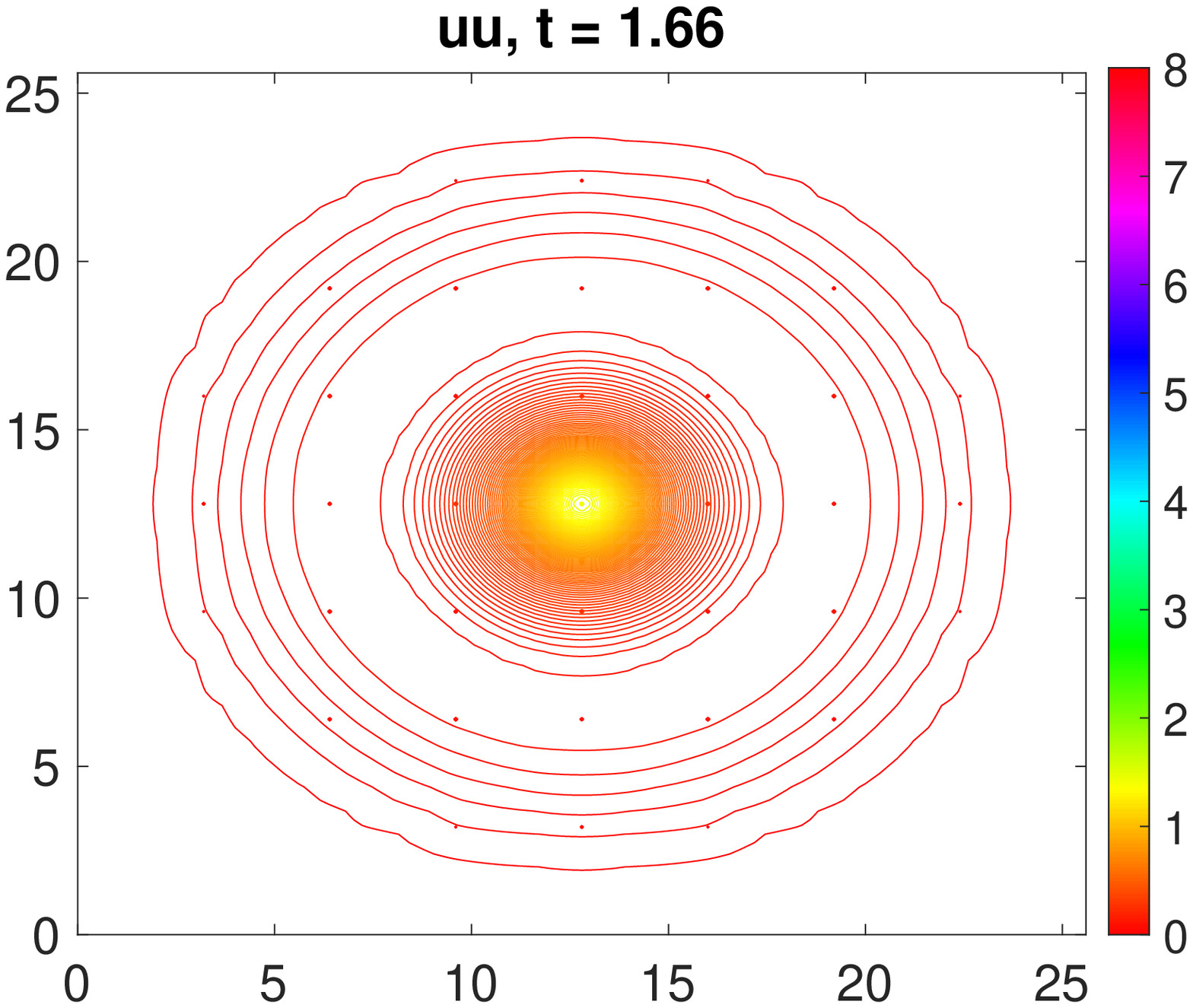}
		\end{subfigure}
		\\
		\begin{subfigure}{.45\textwidth}
			\centering
			\includegraphics[width=0.9\textwidth]{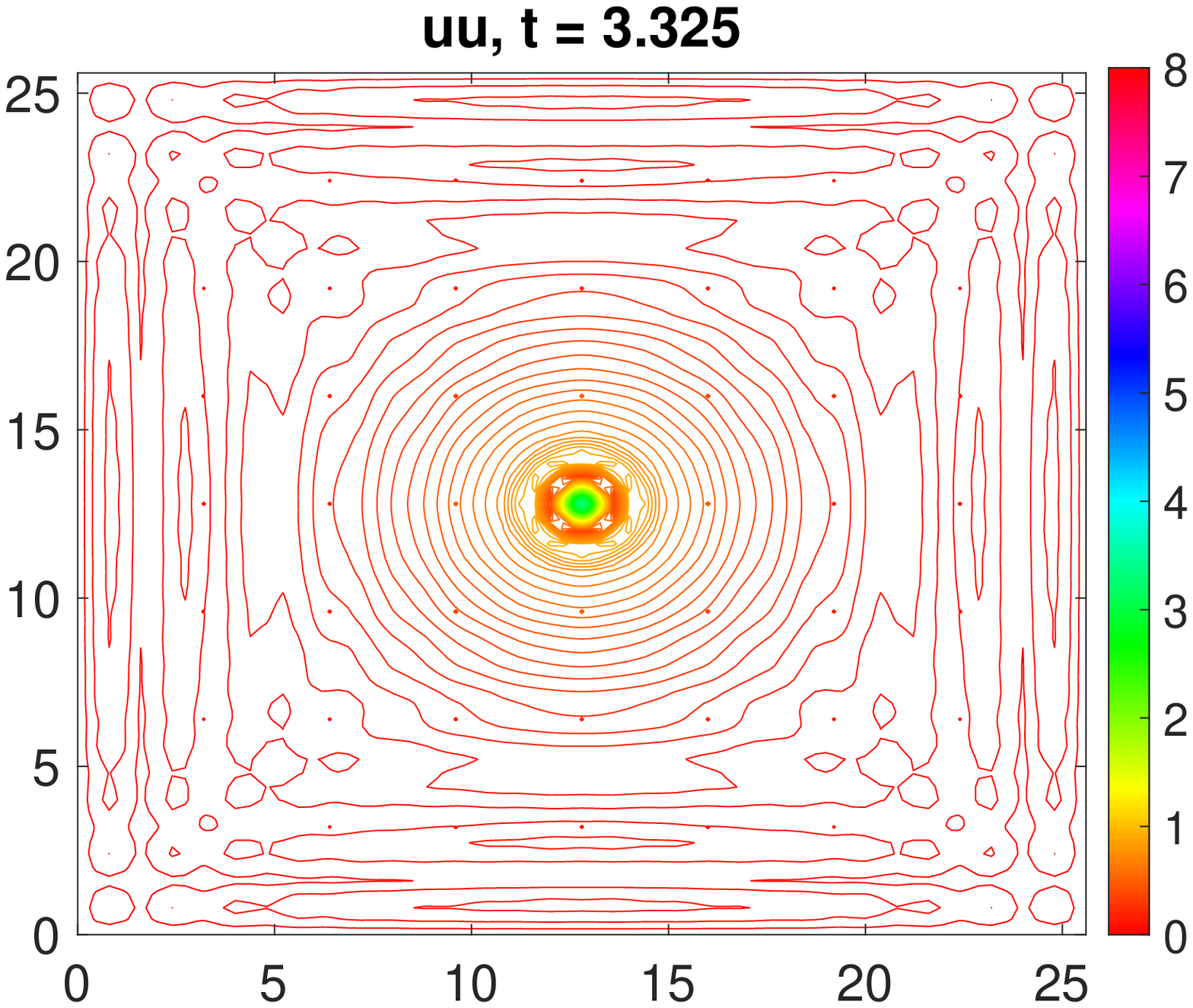}
		\end{subfigure}
		\begin{subfigure}{.45\textwidth}
			\centering
			\includegraphics[width=0.9\textwidth]{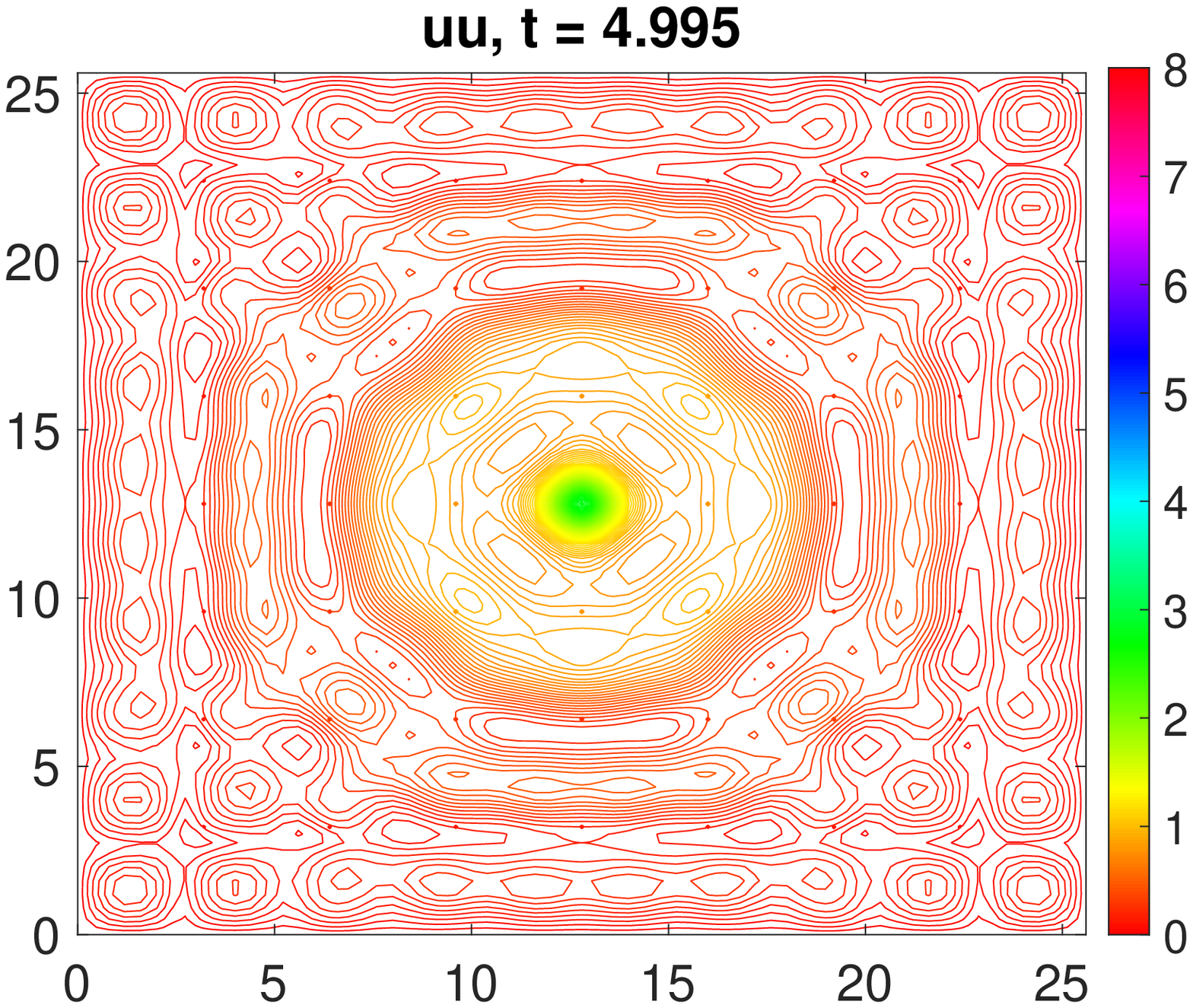}
		\end{subfigure}
		\caption{solution stamps of the regularized non-linear Schr\"odinger equation.}
		\label{fig:filamentation}
	\end{figure}

\section*{Acknowledgements}
This work was supported in part by NSF Grants DMS-2208164, DMS-2210286 (DA) and DMS-1952735, DMS-2012606 (PGM). The work of PGM was also supported by the Office of Naval Research (N00014-18-1-2354) and by the Department of Energy ASCR (DE-SC0022251). 

\section*{Conflict of Interest}
On behalf of all authors, the corresponding author states that there is no conflict of interest. All authors certify that they have no affiliations with or involvement in any organization or entity with any financial interest or non-financial interest in the subject matter or materials discussed in this manuscript. The authors have no financial or proprietary interests in any material discussed in this article.

\section*{Ethical Approval}
N/A

\section*{Informed Consent}
N/A

\bibliography{ref}

\end{document}